\newtheorem{prop}{Proposition}[section]
\newtheorem{thm}[prop]{Theorem}
\newtheorem{lemma}[prop]{Lemma}
\theoremstyle{remark}
\newtheorem{rmk}[prop]{Remark}
\theoremstyle{definition}
\numberwithin{equation}{section}
\renewcommand{\P}{\mathbb{P}}
\newcommand{\E}{\mathbb{E}}
\renewcommand{\L}{\mathbb{L}}
\newcommand{\erre}{\mathbb{R}}
\renewcommand{\epsilon}{\varepsilon}
\newcommand{\m}{\bar{\mu}}
\DeclarePairedDelimiter\norm{\lVert}{\rVert}
\DeclarePairedDelimiterX\ip[2]{\langle}{\rangle}{#1,#2}
\title{On maximal inequalities for purely discontinuous $L_q$-valued
  martingales}
\author{Carlo Marinelli\thanks{Department of Mathematics, University
    College London, Gower Street, London WC1E 6BT, United Kingdom.}}
\date{\normalsize 27 November 2013}
\begin{document}
\maketitle

\begin{abstract}
  We prove maximal inequalities for $L_q$-valued martingales obtained
  by stochastic integration with respect to compensated random
  measures. A version of these estimates for integrals with respect to
  compensated Poisson random measures were first obtained by Dirksen
  \cite{Dirksen} using arguments based on inequalities for sum of
  independent Banach-space-valued random variables, geometric
  properties of Banach spaces, and decoupling inequalities. Our proofs
  are completely different and rely almost exclusively on classical
  stochastic analysis for real semimartingales.
\end{abstract}

\section{Introduction}
Our purpose is to prove maximal inequalities for purely discontinuous
martingales taking values in $L_q$ spaces, with bounds expressed
in terms of predictable elements only. In particular, if $\mu$ is a
random measure with compensator $\nu$ and $\m:=\mu-\nu$, the main
result takes the form
\[
\Bigl( \E\sup_{t \geq 0} \norm[\big]{(g \star \m)_t}^p_{L_q} \Bigr)^{1/p}
\eqsim_{p,q} \norm[\big]{g}_{\mathcal{I}_{p,q}}
\qquad \forall p,\,q \in \mathopen]1,\infty\mathclose[,
\]
where the integrand $g$ takes values in an $L_q$ space, and the norm
on the right-hand side depends, roughly speaking, on integrals of
functionals of $g$ with respect to $\nu$ only.

Estimates of this type were obtained for the first time, assuming that
$\mu$ is a Poisson random measure, by Dirksen \cite{Dirksen}, using an
abstract (and very elegant) approach. Namely, he first obtains
suitable vector-valued generalizations of Rosenthal's inequality for
sums of independent random variables, and then deduces from them
inequalities for stochastic integrals of step processes with respect
to compensated Poissonian measures by means of decoupling
techniques. His proofs rely on several sophisticated arguments,
pertaining to the interplay between the geometry of Banach spaces and
estimates for series of random variables on them, as well as, as
already mentioned, to (vector-valued extensions of) decoupling
inequalities.

Our approach is completely different and uses essentially only
stochastic calculus for \emph{real} semimartingales. Our interest in
this problem arose while working on \cite{cm:EJP10}, where we derived
a very special case of the above maximal inequality (namely the case
$p=q\geq 2)$, by a rather elementary integration in space of a
corresponding inequality for real-valued processes (apparently) due to
Novikov \cite{Nov:75}. The latter inequality has been known for almost
40 years (cf.~\cite{cm:BJ-surv} for an extensive review and a brief
historical account). However, it seems to be generally believed that,
as common wisdom suggests, this naive approach is doomed to fail if
one wants to estimate the $p$-th moment of the $L_q$-norm, with $p
\neq q$. This is actually one of the main reasons, at least from the
point of view of someone interested in stochastic PDEs, why developing
stochastic integration and, more generally, stochastic calculus on
Banach spaces is a worthy endeavor (see e.g. \cite{vNVW:max} and
references therein for recent developments in this direction, when the
integrator is a Wiener process). One of the main messages of the
present work is that the common belief just described is exaggerated,
in the sense that one can indeed get very far using only pointwise
estimates, integration, and classical results of stochastic
calculus. In a figurative way, thinking to a scale of abstraction's
level, one could say that Dirksen's results are obtained ``from
above'', and ours are obtained ``from below''.

\section{Main result}
Let $(X,\mathcal{A},n)$ be a measure space, and denote $L_q$ spaces on
$X$ simply by $L_q$, for any $q \geq 1$.  
All random elements will be defined on a fixed filtered probability
space $(\Omega,\mathcal{F},\mathbb{F},\P)$,
$\mathbb{F}:=(\mathcal{F}_t)_{t \geq 0}$. We shall use the notation
$\norm{\cdot}_{\L_p}$ to denote
$\bigl(\E|\cdot|^p\bigr)^{1/p}$. Similarly, given a normed space $E$
and an $E$-valued random variable $\xi$, we shall use the notation
$\norm{\xi}_{\L_p(E)}:=\bigl(\E\norm{\xi}_E^p\bigr)^{1/p}$, sometimes
omitting the parentheses around $E$ if the meaning is clear. A
completely similar convention will be in place also for other
integrals on other spaces. Let $\mu$ be a random measure on $\erre_+
\times Z$, $\nu$ be its dual predictable projection (compensator), and
$\m:=\mu-\nu$ be the corresponding compensated random
measure. Integrals over $\erre_+ \times Z$ will be denoted simply by
an integration sign. Let $g: \Omega \times \erre_+ \times Z \times X
\to \erre$ be such that $(\omega,t,z) \mapsto g(\omega,t,z,x)$ is
predictable for each $x$ and $(\omega,t,z) \mapsto
\norm[\big]{g(\omega,t,z,\cdot)} \in L_q$ for all $(\omega,t,z)$.

For any $p_1,p_2,p_3 \in [1,\infty]$, introduce the spaces
\[
L_{p_1,p_2,p_3} := \L_{p_1}L_{p_2}(\nu)L_{p_3},
\qquad
\tilde{L}_{p_1,p_2} := \L_{p_1}L_{p_2}L_2(\nu),
\]
where $L_p(\nu):=L_p(\erre_+ \times Z,\nu)$ and the above convention
about $L_p$ spaces with mixed norms is in place. In particular, this
means that, for instance,
\[
\xi \in L_{p_2}(\nu)L_{p_3} \quad \Leftrightarrow \quad
\norm[\big]{\xi}_{L_{p_2}(\nu)L_{p_3}} := \biggl(
\int_{\erre_+ \times Z} \norm{\xi}^{p/2}_{L_{p_3}(X)} \,d\nu
\biggr)^{1/p_2} < \infty.
\]

The (proof of the) following theorem, whose original formulation is due
to Dirksen \cite{Dirksen}, is our main result.
\begin{thm}     \label{thm:m}
  Let $p$, $q \in \mathopen]1,\infty\mathclose[$. One has
  \[
  \norm[\Big]{\sup_{t \geq 0} \norm{(g \star \m)_t}_{L_q}}_{\L_p}
  \eqsim_{p,q} \norm{g}_{\mathcal{I}_{p,q}},
  \]
  where
  \begin{equation}     \label{eq:ipq}
  \mathcal{I}_{p,q} := 
  \begin{cases}
    L_{p,p,q} + L_{p,q,q} + \tilde{L}_{p,q}, &\quad 1 < p \leq q \leq 2,\\
    (L_{p,p,q} \cap L_{p,q,q}) + \tilde{L}_{p,q}, &\quad 1 < q \leq p \leq 2,\\
    L_{p,p,q} \cap (L_{p,q,q} + \tilde{L}_{p,q}), &\quad 1 < q < 2 \leq p,\\
    L_{p,p,q} + (L_{p,q,q} \cap \tilde{L}_{p,q}), &\quad 1 < p < 2 \leq q,\\
    (L_{p,p,q} + L_{p,q,q}) \cap \tilde{L}_{p,q}, &\quad 2 \leq p \leq q,\\
    L_{p,p,q} \cap L_{p,q,q} \cap \tilde{L}_{p,q}, &\quad 2 \leq q \leq p.
  \end{cases}
\end{equation}
\end{thm}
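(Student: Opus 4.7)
The plan is to reduce Theorem~\ref{thm:m} to the analogous inequality for real-valued purely discontinuous martingales (originally due to Novikov, see \cite{cm:BJ-surv}), which asserts that for every $r \in \mathopen]1,\infty\mathclose[$,
\[
\Bigl(\E\sup_t \bigl|(f \star \m)_t\bigr|^r\Bigr)^{1/r}
\eqsim_r \|f\|_{\mathcal{J}_r},
\]
with $\mathcal{J}_r := \L_r L_r(\nu) \cap \L_r L_2(\nu)$ when $r \geq 2$ and $\mathcal{J}_r := \L_r L_r(\nu) + \L_r L_2(\nu)$ when $1 < r \leq 2$. Applied pointwise to $M(\cdot,x) = (g(\cdot,\cdot,\cdot,x)\star\m)$ for each $x \in X$, this gives the equivalence $\|\sup_t |M_t(x)|\|_{\L_p}\eqsim\|g(\cdot,\cdot,\cdot,x)\|_{\mathcal{J}_p}$ (and the same with $q$ in place of $p$). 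The heart of the proof is to integrate this equivalence over $X$ in the $L_q$-norm and rearrange the resulting mixed norms by Minkowski's integral inequality, which supplies $\|\cdot\|_{L_a L_b}\leq\|\cdot\|_{L_b L_a}$ whenever $a\geq b$.

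For the upper bound, the strategy adapts to each of the six regimes of \eqref{eq:ipq}. In the simplest case~6 ($2\leq q\leq p$), I would apply scalar Novikov at $r=p$ pointwise in $x$, take $L_q(X)$ norms, and use $p\geq q$ to move $L_q(X)$ inside $\L_p(\Omega)$, $L_p(\nu)$ and $L_2(\nu)$ successively, producing a bound of the shape $\|g\|_{L_{p,p,q}} + \|g\|_{\tilde L_{p,q}}$, which the intersection structure of $\mathcal{I}_{p,q}$ absorbs. When $\min(p,q)<2$, the sum-space version of scalar Novikov forces a decomposition $g = g_1 + g_2$, and this is precisely what produces the "$+$" appearing in $\mathcal{I}_{p,q}$. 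When $p \leq q$, Minkowski runs in the unfavorable direction for exchanging $\L_p$ with $L_q(X)$; the way around is either to apply Novikov at exponent $r=q$ (so that the $L_q(X)$ norm slips through freely) or to appeal to the vector-valued Doob inequality in the UMD space $L_q$ in order to replace $\sup_t \|M_t\|_{L_q}$ by $\|M_\infty\|_{L_q}$ up to a universal constant.

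The lower bound $\|g\|_{\mathcal{I}_{p,q}} \lesssim \|\sup_t\|(g\star\m)_t\|_{L_q}\|_{\L_p}$ would follow by duality: the dual of a sum of Banach spaces is the intersection of the duals at conjugate exponents (and vice versa), so pairing $g$ against a test integrand and applying the already-established upper bound at the conjugate exponents $(p',q')$ yields the reverse estimate. Alternatively, one can apply the scalar converse Novikov inequality (a consequence of Burkholder--Davis--Gundy together with $[M,M] = \int g^2\,d\mu$ and $\langle M,M\rangle = \int g^2\,d\nu$) pointwise in $x$ and then rearrange norms in the opposite Minkowski direction.

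The main obstacle will be organizing the six-case analysis uniformly and matching the case-dependent hybrid bounds to the precise sum-and-intersection shape of $\mathcal{I}_{p,q}$ in \eqref{eq:ipq}, rather than some weaker avatar. The regimes with $p\leq q$ or $\min(p,q)\leq 2$ are the most delicate: the structure of $\mathcal{I}_{p,q}$ is dictated precisely by the tension between the scalar exponent needed inside and the Minkowski direction allowed outside, and tracking this tension accurately across all six cases is where most of the real work lies.
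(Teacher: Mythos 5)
There is a genuine gap, and it is precisely the one the paper's introduction warns about: the ``apply the scalar Novikov inequality pointwise in $x$ and rearrange by Minkowski'' strategy cannot produce the norms appearing in $\mathcal{I}_{p,q}$ when $p\neq q$, because Minkowski's integral inequality \eqref{eq:HM} runs in the \emph{opposite} direction to the one you need. Take your ``simplest'' case $2\le q\le p$: after bounding $\sup_t\norm{M_t}_{L_q}\le \norm{\sup_t|M_t(\cdot)|}_{L_q}$ and using $\norm{\cdot}_{\L_pL_q}\le\norm{\cdot}_{L_q\L_p}$ (legitimate since $p\ge q$), the scalar inequality at exponent $p$ yields an upper bound by $\norm{g}_{L_q(X)\L_pL_p(\nu)}+\norm{g}_{L_q(X)\L_pL_2(\nu)}$. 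But \eqref{eq:HM} gives $\norm{g}_{\L_pL_p(\nu)L_q}\le\norm{g}_{L_q\L_pL_p(\nu)}$, i.e.\ the target norm $\norm{g}_{L_{p,p,q}}$ is the \emph{smaller} of the two, so your chain of inequalities bounds $\norm{M^*_\infty}_{\L_pL_q}$ by something larger than $\norm{g}_{\mathcal{I}_{p,q}}$ and proves a strictly weaker statement. You cannot ``move $L_q(X)$ inside'' for free; that move decreases the norm. The only bounds this pointwise route does deliver are those obtained by applying the scalar inequality at exponent $r=q$ (so Tonelli applies directly) followed by Lenglart's domination to lower the outer exponent to $p\le q$ --- which is exactly what the paper does in Propositions \ref{prop:uno}, 4.2 and \ref{prop:bella} --- and these yield only the $L_{p,q,q}$ and $\tilde L_{p,q}$ components, never $L_{p,p,q}$.

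The missing ingredients are genuinely vector-valued. The $L_{p,p,q}$ bounds in the paper all pass through Theorem \ref{thm:iBDG}, the two-sided equivalence $\norm{M^*_\infty}_{\L_pL_q}\eqsim\norm{[M,M]^{1/2}_\infty}_{\L_pL_q}$, whose proof requires Benedek--Panzone interpolation with Janson's linearization theorem plus a duality argument; this is not obtainable by integrating a scalar inequality. The case $1<q\le2\le p$ (Proposition \ref{prop:q2p}) further requires a Davis decomposition of $M$ together with the Lenglart--Lepingle--Pratelli domination lemma, and the $\tilde L_{p,q}$ bound for $q\le p\le 2$ (Proposition \ref{prop:rifatta}) needs a duality argument via the Kunita--Watanabe inequality. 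Finally, for $\max(p,q)>4$ the paper runs an induction over dyadic ranges of the exponents, exploiting that $\int|g|^2\,d\m$ is itself a purely discontinuous $L_{q/2}$-valued martingale to which the already-proved cases apply at exponents $(p/2,q/2)$. None of these mechanisms appear in your outline, and without them the six-case matching you describe as ``where most of the real work lies'' cannot be carried out. Your duality reduction of the lower bound to the upper bound at conjugate exponents does agree with the paper.
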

The proof of theorem, which follows by a series of Lemmata and Propositions,
is in Section \ref{sec:p} below.

An explicit description of the norms of the spaces appearing above is
as follows:
\begin{align*}
  \norm[\big]{g}_{L_{p,p,q}} &=
    \left( \E\int \norm[\big]{g}^p_{L_q}\,d\nu \right)^{1/p},\\
  \norm[\big]{g}_{L_{p,q,q}} &=
    \left( \E\biggl(\int \norm[\big]{g}^q_{L_q}\,d\nu\biggr)^{p/q} \right)^{1/p},\\
  \norm[\big]{g}_{\tilde{L}_{p,q}} &= 
    \left( \E\norm[\bigg]{\biggl(\int |g|^2\,d\nu\biggr)^{1/2}}^p_{L_q}
    \right)^{1/p}.
\end{align*}

\section{Preliminaries and auxiliary results}
The maximal inequalities in the following theorem are known, and their
proofs can be found e.g. in \cite{cm:SEE2,cm:JFA10,cm:BJ-surv}.
\begin{thm}     \label{thm:H}
  Let $g$ take values in a Hilbert space $H$. Then one has
  \begin{align}
    \label{eq:sqh}
    \E \sup_{t \geq 0} \norm[\big]{(g \star \m)_t}_H^p &\lesssim_p
    \E \biggl( \int \norm{g}_H^2 \,d\nu \biggr)^{\frac12 p}
    &\qquad \forall p \in \left]0,2\right],\\
    \label{eq:mejeto}
    \E \sup_{t \geq 0} \norm[\big]{(g \star \m)_t}_H^p &\lesssim_p
    \E \int \norm{g}_H^p \,d\nu
    &\qquad \forall p \in \left[1,2\right],\\
    \label{eq:mejo}
    \E \sup_{t \geq 0} \norm[\big]{(g \star \m)_t}_H^p &\lesssim_p
    \E \int \norm{g}_H^p \,d\nu
    + \E \biggl(\int \norm{g}_H^2 \,d\nu\biggr)^{\frac12 p}
    &\qquad \forall p \in \left[2,\infty\right[.
  \end{align}
\end{thm}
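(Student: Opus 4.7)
The three inequalities all follow from the Burkholder--Davis--Gundy inequality for $H$-valued martingales combined with elementary manipulations of the quadratic variation $[M]$ and its predictable compensator $\langle M\rangle$. For $M:=g\star\m$, which is purely discontinuous with jumps $\Delta M_s = \int_Z g(s,z)\,\mu(\{s\},dz)$ at the atoms of $\mu$, one has
\[
[M]_\infty = \sum_s \norm{\Delta M_s}_H^2 = \bigl(\norm{g}_H^2 \star \mu\bigr)_\infty,
\qquad
\langle M\rangle_\infty = \bigl(\norm{g}_H^2 \star \nu\bigr)_\infty,
\]
and the $H$-valued BDG inequality gives $\E \sup_t \norm{M_t}_H^p \lesssim_p \E [M]_\infty^{p/2}$.

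For \eqref{eq:mejeto} with $p\in[1,2]$, I would use subadditivity of the concave map $x\mapsto x^{p/2}$ on $\erre_+$ to obtain $[M]_\infty^{p/2} \leq \sum_s \norm{\Delta M_s}_H^p = (\norm{g}_H^p \star \mu)_\infty$; taking expectations and replacing $\mu$ by $\nu$ (allowed since $\norm{g}_H^p$ is predictable and nonnegative) gives the claimed bound. For \eqref{eq:sqh} with $p\in\mathopen]0,2\mathclose]$ I would instead appeal to Lenglart's domination inequality under the concave power $p/2$, which passes directly from $\E [M]_\infty^{p/2}$ to $\E\langle M\rangle_\infty^{p/2}=\E\bigl(\int\norm{g}_H^2\,d\nu\bigr)^{p/2}$.

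For \eqref{eq:mejo} with $p\geq 2$ the starting point is the decomposition
\[
[M]_\infty = \langle M\rangle_\infty + \bigl(\norm{g}_H^2 \star \m\bigr)_\infty,
\]
so that after raising to $p/2$ and using $(a+b)^{p/2}\lesssim_p a^{p/2}+b^{p/2}$ the compensator piece yields the second summand in \eqref{eq:mejo}, while the remaining term is the $p/2$-th moment of the supremum of the \emph{real-valued} purely discontinuous martingale $N:=\norm{g}_H^2 \star \m$, whose jumps are $\norm{g(s,\cdot)}_H^2$. When $p\in[2,4]$, applying the already proved inequality \eqref{eq:mejeto} to $N$ with exponent $p/2\in[1,2]$ yields exactly $\E\int \norm{g}_H^p\,d\nu$; for $p>4$ one bootstraps by induction on the dyadic interval containing $p$, applying \eqref{eq:mejo} at the previous step to $N$ and absorbing any mixed terms via Young's inequality.

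The main technical obstacle is the Lenglart-type passage from $[M]$ to $\langle M\rangle$ under a concave power in \eqref{eq:sqh}: this is precisely the step that does \emph{not} generalize to $L_q$ with $q\neq 2$, and is the reason why the cases in Theorem~\ref{thm:m} will have to split into six regimes, handled by different combinations of the three quantities entering \eqref{eq:ipq}.
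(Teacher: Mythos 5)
The paper does not actually prove Theorem~\ref{thm:H}: it states the inequalities as known and refers to the literature for proofs. Your argument is therefore not competing with an in-paper proof, but it is a correct reconstruction of the standard one, and it is consistent with the machinery the paper itself deploys later in the $L_q$-valued case. Specifically: the identity $[M]_\infty=\sum_s\norm{\Delta M_s}_H^2=\int\norm{g}_H^2\,d\mu$ is exactly the paper's definition of $[M,M]$ (note this tacitly assumes $\nu(\{s\}\times Z)=0$, i.e.\ quasi-left-continuity, as does the paper); the subadditivity bound $(\sum a_s)^{p/2}\le\sum a_s^{p/2}$ plus passage from $\mu$ to $\nu$ gives \eqref{eq:mejeto}; and the decomposition $\int\norm{g}_H^2\,d\mu=\int\norm{g}_H^2\,d\m+\int\norm{g}_H^2\,d\nu$ followed by a dyadic bootstrap, with the cross terms $\bigl(\int\norm{g}_H^4\,d\nu\bigr)^{p/4}$ absorbed by the interpolation estimate of Lemma~\ref{lm:stimette}, is precisely the induction the paper runs in the cases $2\le p\le q$ and $2\le q\le p$. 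Two small remarks. First, for \eqref{eq:sqh} with $p<1$ your route requires the BDG upper bound below the first moment; this is valid (BDG holds for all moderate functions, cf.\ Lenglart--L\'epingle--Pratelli), but the more economical and more commonly cited derivation bypasses $[M]$ entirely: Doob's inequality and the isometry give $\E\sup_{t\le T}\norm{M_t}_H^2\lesssim\E\ip{M}{M}_T$ for every stopping time $T$, and Lenglart's domination inequality then yields \eqref{eq:sqh} for all $p<2$ in one step --- this is also the pattern the paper follows in Propositions~\ref{prop:uno} and~\ref{prop:bella}. Second, your Lenglart step as phrased (domination of $[M]$ by its compensator $\ip{M}{M}$ under a concave power) is correct but needs the localization caveat the paper records in the Remark after Proposition~\ref{prop:uno}. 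With these points understood, the proposal is sound.
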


\medskip

Throughout the rest of the paper, unless otherwise stated, we shall
use the notation $M:=g \ast \m$, as well as
\[
[M,M]_t := \int_0^t\!\!\!\int_Z |g|^2\,d\mu,
\qquad
\ip{M}{M}_t := \int_0^t\!\!\!\int_Z |g|^2\,d\nu.
\]
Moreover, for notational compactness, we shall write, for any
$L_q$-valued process $Y$,
\[
\norm[\big]{Y^*_\infty}_{\L_pL_q}
:= \Bigl( \E\sup_{t \geq 0} \norm{Y_t}_{L_q}^p \Bigr)^{1/p}.
\]

\medskip

The following estimate plays a crucial role throughout the paper.
\begin{thm}     \label{thm:iBDG}
  One has, for any $p$, $q \in \mathopen]1,\infty\mathclose[$,
  \begin{equation}
    \label{eq:iBDG}
    \norm[\big]{[M,M]^{1/2}_\infty}_{\L_pL_q} \lesssim_{p,q}
    \norm[\big]{M^*_\infty}_{\L_pL_q} 
    \lesssim_{p,q} \norm[\big]{[M,M]^{1/2}_\infty}_{\L_pL_q}.
  \end{equation}
  Moreover, the upper bound also holds for $p=q=1$.
\end{thm}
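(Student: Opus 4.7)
The claim is an integrated Burkholder--Davis--Gundy inequality for the $L_q$-valued purely discontinuous martingale $M = g\star\m$, with the mixed-norm $\L_pL_q$ appearing on both sides. My plan is to reduce the maximal inequality to a terminal-value one via Doob, exploit the scalar BDG fibrewise in $x\in X$, and patch the mixed norms together via Fubini on the diagonal and interpolation off-diagonal.

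Since $M$ takes values in the Banach space $L_q$, the scalar process $t \mapsto \norm{M_t}_{L_q}$ is a non-negative submartingale, so for $p > 1$ Doob's maximal inequality gives
\[
\norm{M^*_\infty}_{\L_pL_q} \lesssim_p \norm{M_\infty}_{\L_pL_q},
\]
while the reverse bound is trivial. It therefore suffices to establish the terminal equivalence $\norm{M_\infty}_{\L_pL_q} \eqsim_{p,q} \norm{[M,M]^{1/2}_\infty}_{\L_pL_q}$. The upper bound at $p=q=1$, where Doob is unavailable, is obtained directly: one has $\E\sup_t\norm{M_t}_{L_1} \leq \int \E\sup_t|M_t(x)|\,dn(x)$ by monotonicity and Fubini, and Davis' scalar $\L_1$-BDG applied fibrewise to $M(x) = g(x)\star\m$ then closes the estimate.

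For each fixed $x \in X$, $M(x) := g(x)\star\m$ is a real-valued purely discontinuous martingale, and the classical scalar BDG (the Novikov inequality cited in the introduction) yields $\norm{M_\infty(x)}_{\L_p} \eqsim_p \norm{[M,M]^{1/2}_\infty(x)}_{\L_p}$. On the diagonal $p=q$, Fubini delivers the mixed-norm equivalence at once:
\[
\norm{M_\infty}_{\L_qL_q}^q
= \int \norm{M_\infty(x)}_{\L_q}^q\,dn(x)
\eqsim_q \int \norm{[M,M]^{1/2}_\infty(x)}_{\L_q}^q\,dn(x)
= \norm{[M,M]^{1/2}_\infty}_{\L_qL_q}^q.
\]

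The main obstacle lies in the off-diagonal cases $p\neq q$, since the pointwise $\L_p$-equivalence cannot be transferred to a mixed-norm equivalence by naive Fubini or Minkowski: Minkowski's integral inequality provides only a one-sided comparison between $\norm{\cdot}_{\L_pL_q}$ and $\norm{\cdot}_{L_q\L_p}$, and in each of the regimes $p>q$ and $p<q$ this single direction runs the wrong way for the required bound. I would address this by complex interpolation on the mixed-norm scale $\L_pL_q$, exploiting the equivalence along the two crossing curves $\{p=q\}$ (just proved) and $\{q=2\}$ (standard Hilbert-space BDG, closely related to Theorem~\ref{thm:H}); together these curves bootstrap to cover the entire parameter square $\mathopen]1,\infty\mathclose[\,\times\,\mathopen]1,\infty\mathclose[$. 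Making the two-sided sublinear interpolation rigorous, and controlling the blow-up of the constants near the boundaries $p=1$ and $q=1$, is the chief technical point.
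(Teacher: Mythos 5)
Your reduction via Doob, the fibrewise scalar BDG on the diagonal $p=q$, the Hilbert-space case $q=2$, and the $\L_1L_1$ upper bound via Fubini and the scalar Davis inequality are all sound, and the two endpoint families you identify ($\{p=q\}$ and $\{q=2\}$) are exactly the ones the paper interpolates between. The gap is in the final step: a \emph{two-sided} equivalence does not interpolate. Interpolation (Riesz--Thorin, or its mixed-norm extension due to Benedek and Panzone) produces upper bounds for an operator that is bounded at the endpoints; it gives no information about lower bounds, and there is no ``two-sided sublinear interpolation'' theorem to appeal to --- an operator can perfectly well be bounded below on two endpoint spaces without being bounded below on the intermediate ones. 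Consequently your scheme yields only \emph{one} of the two inequalities in \eqref{eq:iBDG}, namely $\norm[\big]{[M,M]^{1/2}_\infty}_{\L_pL_q}\lesssim\norm[\big]{M^*_\infty}_{\L_pL_q}$, obtained by interpolating the sublinear map $M\mapsto[M,M]^{1/2}_\infty$ between the families $\L_pL_p$ and $\L_pL_2$ (and even here one must justify applying complex interpolation to a sublinear map, which the paper does by invoking Janson's linearization theorem alongside Benedek--Panzone).

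The reverse inequality $\norm[\big]{M^*_\infty}_{\L_pL_q}\lesssim\norm[\big]{[M,M]^{1/2}_\infty}_{\L_pL_q}$ --- the half actually needed for the upper bounds in Theorem \ref{thm:m} --- requires a separate argument, and the paper obtains it by duality: one writes $\norm{M_\infty}_{\L_pL_q}=\sup_\zeta\E(M_\infty,\zeta)$ over the unit ball of $\L_{p'}L_{q'}$, takes a martingale $N$ with $N_\infty=\zeta$, uses $\E(M_\infty,N_\infty)=\E[M,N]_\infty$ together with the Kunita--Watanabe inequality for the covariation and H\"older's inequality in the mixed norms, and then applies the already-proved first inequality to $N$ in $\L_{p'}L_{q'}$. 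If you replace your interpolation of the equivalence by this duality step, the proof closes; as written, the harder half of the theorem rests on a tool that does not exist.
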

\begin{proof}
  The map $M \mapsto [M,M]_T^{1/2}$ is sublinear and bounded on
  $\L_pL_p$ and on $\L_pL_2$ for all $p \in
  \mathopen]1,\infty\mathclose[$. The inequality on the left then
  follows by the extension of Riesz-Thorin interpolation due to
  Benedek and Panzone \cite{BenPan}, coupled with the linearization
  theorem by Janson \cite{Jans:interp}.

  To prove the inequality on the right, we use a duality argument: we
  have
  \[
  \norm[\big]{M_\infty}_{\L_pL_q} = 
  \sup_{\zeta \in B_1(\L_{p'}L_{q'})} \E(M_\infty,\zeta),
  \]
  where $(\cdot,\cdot)$ stands for the duality form between $L_p$ and
  $L_{p'}$. Now take a martingale $N$ with final value
  $N_\infty=\zeta$, and note
  \begin{align*}
    \E(M_\infty,\zeta) &= \E(M_\infty,N_\infty) = \E[M_\infty,N_\infty]\\
    &\leq \norm[\big]{[M,M]^{1/2}_\infty}_{\L_pL_q} 
          \norm[\big]{[N,N]^{1/2}_\infty}_{\L_{p'}L_{q'}}\\
    &\lesssim_{p,q} \norm[\big]{[M,M]^{1/2}_\infty}_{\L_pL_q}
                   \norm[\big]{N_\infty}_{\L_{p'}L_{q'}}
    \leq \norm[\big]{[M,M]^{1/2}_\infty}_{\L_pL_q},
  \end{align*}
  from which the second inequality follows immediately.
\end{proof}

\medskip

The following simple estimate will be used repeatedly.
\begin{lemma}     \label{lm:stimette}
  Let $(X,\mathcal{A},m)$ be a measure space and $p > r \geq 1$. If $f
  \in L_r(X,m) \cap L_p(X,m)$, then $f \in L_q(X,m)$ for all $q \in
  \mathopen]r,\mathclose p[$, and
  \[
  \norm{f}^\alpha_{L_q} \leq \norm{f}^\alpha_{L_r} + \norm{f}^\alpha_{L_p}
  \qquad \forall \alpha>0.
  \]
\end{lemma}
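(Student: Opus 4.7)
The statement is a standard interpolation fact dressed up with the inequality $\|f\|_{L_q}^\alpha \le \|f\|_{L_r}^\alpha + \|f\|_{L_p}^\alpha$ instead of a product, so the plan is short. The natural approach is to first prove the Lyapunov inequality by Hölder and then apply weighted AM--GM (i.e.~Young's inequality) to separate the two norms.

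First, I would choose the interpolation parameter $\theta \in \mathopen]0,1\mathclose[$ such that
\[
\frac{1}{q} = \frac{\theta}{r} + \frac{1-\theta}{p};
\]
the assumption $r < q < p$ guarantees its existence and uniqueness. Then I would write $|f|^q = |f|^{q\theta} \cdot |f|^{q(1-\theta)}$ and apply Hölder's inequality on $(X,m)$ with conjugate exponents $r/(q\theta)$ and $p/(q(1-\theta))$, whose reciprocals sum to $1$ by the choice of $\theta$. This yields the classical bound
\[
\norm{f}_{L_q} \le \norm{f}_{L_r}^{\theta}\,\norm{f}_{L_p}^{1-\theta},
\]
which in particular shows $f \in L_q$.

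Raising to the power $\alpha > 0$ gives $\norm{f}_{L_q}^\alpha \le \bigl(\norm{f}_{L_r}^\alpha\bigr)^{\theta}\bigl(\norm{f}_{L_p}^\alpha\bigr)^{1-\theta}$, and then Young's inequality applied with exponents $1/\theta$ and $1/(1-\theta)$, and arguments $\norm{f}_{L_r}^\alpha$ and $\norm{f}_{L_p}^\alpha$, gives
\[
\norm{f}_{L_q}^\alpha \le \theta\,\norm{f}_{L_r}^\alpha + (1-\theta)\,\norm{f}_{L_p}^\alpha
\le \norm{f}_{L_r}^\alpha + \norm{f}_{L_p}^\alpha,
\]
which is the desired estimate. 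There is no real obstacle here; the only point worth a moment's care is checking that the Hölder exponents used in the first step are admissible (both $\ge 1$ with reciprocals summing to $1$), which is automatic from $r<q<p$ and the choice of $\theta$.
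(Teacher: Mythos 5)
Your proof is correct and follows essentially the same route as the paper: an interpolation bound of the form $\norm{f}_{L_q} \leq \norm{f}_{L_r}^{\theta}\norm{f}_{L_p}^{1-\theta}$ followed by Young's inequality. The only (immaterial) difference is the parametrization of $\theta$ --- you take $1/q = \theta/r + (1-\theta)/p$ and derive the bound via H\"older, while the paper takes $q = \theta r + (1-\theta)p$ and invokes Lyapunov's inequality; either choice yields a product bound with exponents summing to one, which is all the final Young step requires.
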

\begin{proof}
  Let $\theta \in [{0,1}[$ be such that $q=r\theta+p(1-\theta)$. By
  Lyapunov's inequality one has, after raising to the power $\alpha$,
  \[
  \norm{f}^\alpha_{L_q} \leq \norm{f}_{L_r}^{\alpha\theta} \, 
                           \norm{f}_{L_p}^{\alpha(1-\theta)}.
  \]
  Young's inequality $ab \leq a^s/s + b^{s'}/s'$ with conjugate
  exponents $s=1/\theta$ and $s'=1/(1-\theta)$ yields
  \[
  \norm{f}^\alpha_{L_q} \leq \theta \norm{f}_{L_r}^\alpha + 
                           (1-\theta) \norm{f}_{L_p}^{\alpha}
  \leq \norm{f}_{L_r}^\alpha + \norm{f}_{L_p}^{\alpha}.
  \qedhere
  \]
\end{proof}

\medskip

We shall also use several times the following inequality between norms
of functions in $L_p$-spaces with mixed norms, which sometimes goes under
the name of H\"older-Minkowski's inequality:
\begin{equation}     \label{eq:HM}
\norm[\big]{f}_{L_p(L_q)} \leq \norm[\big]{f}_{L_q(L_p)}
\qquad \forall p \geq q.
\end{equation}


\section{Proof of the main result}     \label{sec:p}
It is enough to prove only the upper bounds
\[
\norm[\big]{(g \ast \m)_\infty^*}_{\L_pL_q} 
\lesssim_{p,q} \norm[\big]{g}_{\mathcal{I}_{p,q}},
\]
as the lower bounds will follow by duality (in fact, the dual of
$\mathcal{I}_{p,q}$ is $\mathcal{I}_{p',q'}$ -- cf.~\cite{Dirksen}).

It should be noted that in many cases we prove in fact more general
results than those needed to obtain the above upper bound.

\subsection{Case $1 < p \leq q \leq 2$}
The upper bound in Theorem \ref{thm:m} with parameters $p$ and $q$
such that $1 < p \leq q \leq 2$ is a consequence of the next three
Propositions.

\begin{prop}     \label{prop:uno}
  Let $1 \leq q \leq 2$, $0 < p \leq q$. One has
  \begin{equation*}
    \E\sup_{t\geq 0} \norm[\big]{(g \star \m)_t}_{L_q}^p \lesssim_{p,q}
    \E\biggl( \int
    \norm[\big]{g}_{L_q}^q\,d\nu \biggr)^{p/q}.
  \end{equation*}
\end{prop}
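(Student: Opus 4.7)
My plan is to carry out the real-valued analysis pointwise in $x \in X$, combine it with Fubini to obtain the diagonal case $p=q$, and then descend to $0 < p < q$ by Lenglart's domination principle. Let $M := g \star \m$. For each fixed $x \in X$ the process $t \mapsto M_t(x)$ is a real-valued purely discontinuous martingale whose quadratic variation is $[M(x),M(x)]_t = \int_0^t\!\int_Z |g(x)|^2\,d\mu$; since $\mu$ is integer-valued, this is simply the sum of its squared jumps. As $q \leq 2$, the elementary inequality $(\sum_i a_i^2)^{q/2} \leq \sum_i a_i^q$ yields $[M(x),M(x)]_\infty^{q/2} \leq \int |g(x)|^q\,d\mu$ pointwise. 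Combining this with the scalar BDG inequality and the compensation identity $\E\int h\,d\mu = \E\int h\,d\nu$ (valid for every nonnegative predictable $h$) gives
\[
\E\sup_{t \geq 0} |M_t(x)|^q \lesssim_q \E\int |g(x)|^q\,d\nu.
\]
Integrating in $x$, applying Fubini, and using the trivial bound $\sup_t \int |M_t(x)|^q\,dx \leq \int \sup_t |M_t(x)|^q\,dx$ produces the diagonal case of the proposition:
\[
\E\sup_{t \geq 0} \norm{M_t}_{L_q}^q \lesssim_q \E\int \norm{g}_{L_q}^q\,d\nu.
\]

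For $0 < p < q$ I would invoke Lenglart's domination. Set $X_t := \norm{M_t}_{L_q}^q$ and $A_t := C\int_0^t\!\int_Z \norm{g}_{L_q}^q\,d\nu$, where $C$ is the implicit constant from the diagonal inequality just obtained. Then $X$ is nonnegative, right-continuous and adapted, $A$ is predictable and increasing, and applying the diagonal inequality with $g$ replaced by the stopped integrand $g \mathbf{1}_{[0,\tau]}$ shows that $\E X_\tau \leq \E A_\tau$ for every bounded stopping time $\tau$. Lenglart's inequality then yields, for every $r \in \mathopen]0,1\mathclose[$,
\[
\E\bigl(\sup_{t \geq 0} X_t\bigr)^r \lesssim_r \E A_\infty^r,
\]
and the choice $r = p/q$ delivers precisely the desired bound.

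The main obstacle in this plan is the passage from $d\mu$ to $d\nu$ inside the concave power $(\cdot)^{p/q}$. The identity $\E\int h\,d\mu = \E\int h\,d\nu$, combined with Jensen's inequality, would only yield $(\E\int h\,d\nu)^{p/q}$ on the right-hand side, which is strictly larger than the target $\E(\int h\,d\nu)^{p/q}$. Lenglart's inequality, with its stopping-time formulation, is precisely what bridges this gap.
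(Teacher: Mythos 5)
Your argument is correct and follows essentially the same route as the paper: a scalar maximal inequality applied pointwise in $x$, integration over $X$ via Tonelli/Fatou, and then Lenglart's domination (after localizing with $g\mathbf{1}_{[0,\tau]}$) to descend from the diagonal exponent $q$ to $p<q$. The only cosmetic difference is that you re-derive the scalar estimate \eqref{eq:mejeto} from BDG and the embedding $\ell_2\hookrightarrow\ell_q$ for $q\le 2$, whereas the paper simply cites it as Theorem \ref{thm:H}.
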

\begin{proof}
  Inequality \eqref{eq:mejeto} with exponent $1 \leq q \leq
  2$ and $H=\erre$ yields
  \[
  \E\sup_{t \geq 0} \big\lvert (g \star \m)_t \big\rvert^q \lesssim_q
  \E\int |g|^q\,d\nu,
  \]
  hence also, by Fatou's lemma and Tonelli's theorem,
  \[
  \E\sup_{t\geq 0} \norm[\big]{(g \star \m)_t}_{L_q}^q \lesssim_q
  \E \int \norm[\big]{g}_{L_q}^q\,d\nu.
  \]
  Let $T$ be any stopping time. Replacing $g$ with
  $g\mathbf{1}_{[0,T]}(t)$, the previous inequality implies
  \[
  \E\sup_{t\leq T} \norm[\big]{(g \star \m)_t}_{L_q}^q \lesssim_q
  \E \int_0^T\!\!\!\int_Z \norm[\big]{g}_{L_q}^q\,d\nu.
  \]
  Lenglart's domination inequality finally gives
  \[
  \E\sup_{t\geq 0} \norm[\big]{(g \star \m)_t}_{L_q}^p \lesssim_{p,q}
  \E\biggl( \int \norm[\big]{g}_{L_q}^q\,d\nu \biggr)^{p/q}
  \]
  for any $0 < p < q$.
\end{proof}
\begin{rmk}
  The localization step spelled out in the previous proof, which is
  needed to apply Lenglart's domination inequality, will be implicitly
  assumed in the proofs to come.
\end{rmk}

\begin{prop}
  Let $0 < p \leq q \leq 2$. One has
  \begin{equation*}
  \E\sup_{t \geq 0} \norm[\big]{(g \star \m)_t}_{L_q}^p \lesssim_{p,q}
  \E\norm[\bigg]{\biggl(\int |g|^2\,d\nu\biggr)^{1/2}}_{L_q}^p.
  \end{equation*}
\end{prop}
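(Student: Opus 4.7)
The plan is to run, pointwise in $x\in X$, the scalar-valued square-function bound \eqref{eq:sqh} from Theorem~\ref{thm:H}, integrate over $X$, and then use Lenglart's inequality to pass from the exponent $q$ to an arbitrary $p\in\mathopen]0,q\mathclose]$. The structure is therefore a close cousin of the proof of Proposition~\ref{prop:uno}, with \eqref{eq:sqh} in the role previously played by \eqref{eq:mejeto}.

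First, for each fixed $x \in X$ the real-valued purely discontinuous martingale $t \mapsto (g\star\m)_t(x)$ satisfies, by \eqref{eq:sqh} with $H=\erre$ and exponent $q\in\mathopen]0,2\mathclose]$,
\[
\E \sup_{t\geq 0} \bigl|(g\star\m)_t(x)\bigr|^q \lesssim_q
\E\biggl(\int |g(x)|^2\,d\nu\biggr)^{q/2}.
\]
Integrating both sides in $x$ against $n$, and combining the elementary inequality
$\sup_t\int_X\varphi_t(x)\,dn(x) \leq \int_X\sup_t\varphi_t(x)\,dn(x)$
with Tonelli's theorem, I obtain the exponent-$q$ version of the claim:
\[
\E\sup_{t\geq 0}\norm[\big]{(g\star\m)_t}_{L_q}^q \lesssim_q
\E\norm[\bigg]{\biggl(\int |g|^2\,d\nu\biggr)^{1/2}}_{L_q}^q.
\]

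To descend from $q$ to an arbitrary $p\in\mathopen]0,q\mathclose]$, I would localize at a stopping time $T$ exactly as in Proposition~\ref{prop:uno} (substituting $g\mathbf{1}_{[0,T]}$ for $g$) to obtain the same estimate with the integrals on the right-hand side truncated to $[0,T]$. The dominator
\[
A_T := \norm[\bigg]{\biggl(\int_0^T\!\!\!\int_Z |g|^2\,d\nu\biggr)^{1/2}}_{L_q}^q
\]
is a nondecreasing predictable process of $T$, because for each $x$ the inner time-space integral is monotone in $T$ by positivity of $|g|^2$; consequently Lenglart's domination inequality with exponent $p/q\leq 1$ yields the desired bound with $p$ in place of $q$.

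I do not anticipate any substantive obstacle: each step is a direct transcription of a technique already used in the preceding proposition, and the only mildly delicate point is the monotonicity of $A_T$, which is immediate. If anything, the slight care to be exercised is the Fatou-type exchange of $\sup_t$ with $\int_X$, but this is a one-line inequality and does not require equality.
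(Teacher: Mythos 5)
Your proposal is correct and follows essentially the same route as the paper's own proof: apply the scalar square-function estimate \eqref{eq:sqh} pointwise in $x$ with exponent $q$, integrate over $X$ using the $\sup$--integral exchange and Tonelli, and then descend to exponents $p\leq q$ via localization and Lenglart's domination inequality. Your added remark on the monotonicity of the dominating process is a harmless elaboration of a step the paper leaves implicit.
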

\begin{proof}
  Inequality \eqref{eq:sqh} with exponent $q \in
  \mathopen]0,2\mathclose]$ and $H=\erre$ yields
  \[
  \E \sup_{t \geq 0} \big\lvert (g \star \m)_t \big\rvert^q \lesssim_q
  \E\Bigl( \int |g|^2\,d\nu \Bigr)^{q/2}.
  \]
  Integrating over $X$, taking into account Fatou's lemma and
  Tonelli's theorem, one obtains
  \[
  \E\sup_{t \geq 0} \norm[\big]{(g \star \m)_t}_{L_q}^q
  \lesssim_q
  \E\norm[\bigg]{\biggl(\int_0^T |g|^2\,d\nu\biggr)^{1/2}}_{L_q}^q,
  \]
  which in turn yields, appealing to Lenglart's domination
  inequality,
  \[
  \E\sup_{t \geq 0} \norm[\big]{(g \star \m)_t}_{L_q}^p \lesssim_{p,q}
  \E\norm[\bigg]{\biggl(\int |g|^2\,d\nu\biggr)^{1/2}}_{L_q}^p.
  \qedhere
  \]
\end{proof}

\begin{prop}     \label{prop:13}
  Let $1 < p \leq q \leq 2$. One has
  \begin{equation*}
    \E\sup_{t\geq 0} \norm[\big]{(g \star \m)_t}_{L_q}^p \lesssim_{p,q}
    \E\int \norm[\big]{g}_{L_q}^p\,d\nu.
  \end{equation*}
\end{prop}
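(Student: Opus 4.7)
The plan is to combine the Burkholder-type estimate of Theorem \ref{thm:iBDG}, used in its upper-bound direction $\norm{M^*_\infty}_{\L_p L_q} \lesssim \norm{[M,M]^{1/2}_\infty}_{\L_p L_q}$, with two successive applications of the elementary pointwise subadditivity
\[
\Bigl(\sum_i a_i\Bigr)^\alpha \leq \sum_i a_i^\alpha
\qquad (0 < \alpha \leq 1, \; a_i \geq 0)
\]
applied to the atomic measure $\mu(\omega,\cdot)$, whose atomicity is implicit in the formula $[M,M]_t = \int|g|^2\,d\mu$. The predictable integrand $\norm{g}_{L_q}^p$ produced at the end of the chain will then slot into the compensator identity $\E\int F\,d\mu = \E\int F\,d\nu$ valid for nonnegative predictable $F$.

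Concretely, Theorem \ref{thm:iBDG} first yields
\[
\E\sup_{t \geq 0} \norm{M_t}_{L_q}^p \lesssim_{p,q} \E\bigl\lVert [M,M]^{1/2}_\infty \bigr\rVert_{L_q}^p
= \E \biggl( \int_X \biggl( \int |g|^2\,d\mu \biggr)^{q/2} dn \biggr)^{p/q}.
\]
Since $q/2 \leq 1$, the subadditivity inequality, applied pathwise in $\omega$ and pointwise in $x$, gives $(\int|g|^2\,d\mu)^{q/2} \leq \int|g|^q\,d\mu$; integrating over $X$ and invoking Tonelli's theorem produces $\int_X (\int|g|^2\,d\mu)^{q/2}\,dn \leq \int \norm{g}_{L_q}^q\,d\mu$. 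A second application of subadditivity, now with $\alpha = p/q \leq 1$ and with scalar integrand $\norm{g}_{L_q}^q$, yields $(\int\norm{g}_{L_q}^q\,d\mu)^{p/q} \leq \int\norm{g}_{L_q}^p\,d\mu$. Taking expectation and using that $\norm{g}_{L_q}^p$ is nonnegative and predictable, one has $\E\int\norm{g}_{L_q}^p\,d\mu = \E\int\norm{g}_{L_q}^p\,d\nu$, which closes the chain of inequalities.

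The only point requiring genuine care is that both uses of subadditivity rely essentially on the atomicity of $\mu$: the pointwise inequality $(\int f\,d\mu)^\alpha \leq \int f^\alpha\,d\mu$ for $\alpha < 1$ is false for generic absolutely continuous measures (as elementary examples on Lebesgue measure show). In the purely discontinuous setting of the paper this atomicity is automatic, and indeed it is precisely this feature that makes the clean bound $\E\int\norm{g}_{L_q}^p\,d\nu$ available here — without ever invoking Lenglart domination, in contrast to Propositions~\ref{prop:uno} and the one immediately preceding this statement.
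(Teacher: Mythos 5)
Your proof is correct and follows essentially the same route as the paper's: both pass through the upper bound of Theorem \ref{thm:iBDG} and then reduce $\norm[\big]{[M,M]_\infty^{1/2}}_{L_q}$ to $\int\norm{g}_{L_q}^p\,d\mu = \int\norm{g}_{L_q}^p\,d\nu$ via embeddings of discrete $\ell_r$-norms over the atoms of $\mu$ (your ``subadditivity'') combined with an interchange of the order of integration. The only cosmetic difference is that you route through $\ell_q$, so that the interchange is an exact Tonelli identity, whereas the paper goes through $\ell_p$ and invokes H\"older--Minkowski \eqref{eq:HM}; both are valid since $p\leq q\leq 2$, and your remark that the atomicity of the jump measure is what makes the pointwise subadditivity legitimate is well taken.
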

\begin{proof}
  By Theorem \ref{thm:iBDG} one has
  \[
  \norm[\big]{(g \star \m)^*_\infty}_{\L_pL_q} \lesssim_{p,q}
  \norm[\big]{[M,M]_\infty^{1/2}}_{\L_pL_q} =
  \norm[\big]{\norm{\Delta M}_{\ell_2}}_{\L_pL_q} =
  \norm[\big]{\Delta M}_{\L_pL_q\ell_2}.
  \]
  Since $p \leq 2$, one has $\norm{\Delta M}_{\ell_2} \leq
  \norm{\Delta M}_{\ell_p}$, hence, by inequality \eqref{eq:HM},
  \[
  \norm[\big]{\Delta M}^p_{\L_pL_q\ell_2} \leq
  \norm[\big]{\Delta M}^p_{\L_pL_q\ell_p} \leq 
  \norm[\big]{\Delta M}^p_{\L_p\ell_pL_q} =
  \E \int \norm{g}_{L_q}^p\,d\mu = \E \int \norm{g}_{L_q}^p\,d\nu.
  \qedhere
  \]
\end{proof}


\subsection{Case $1 < q \leq p \leq 2$}
The upper bound in Theorem \ref{thm:m} with parameters $p$ and $q$
such that $1 < q \leq p \leq 2$ is a consequence of the next two
Propositions.

\begin{prop}     \label{lm:34}
  Let $1 < q \leq p \leq 2$. Then one has
  \[
  \E\sup_{t \geq 0} \norm[\big]{(g \star \m)_t}_{L_q}^p \lesssim_{p,q}
  \E\int \norm[\big]{g}_{L_q}^p\,d\nu
  + \E\biggl(\int \norm[\big]{g}_{L_q}^q\,d\nu \biggr)^{p/q}.  
  \]
\end{prop}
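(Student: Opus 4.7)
The plan is to start from Theorem \ref{thm:iBDG}, which controls $\norm{M^*_\infty}_{\L_pL_q}$ by $\norm{[M,M]^{1/2}_\infty}_{\L_pL_q}$, and then to reduce the vector-valued quadratic variation to a scalar stochastic integral that can be handled by the real-valued Novikov/BDG bound \eqref{eq:mejeto}. Concretely, I write $[M,M]^{1/2}_\infty = \norm{\Delta M}_{\ell_2}$ and use $q \leq 2$ to pass from $\ell_2$ to $\ell_q$, since then $\norm{\Delta M}_{\ell_2} \leq \norm{\Delta M}_{\ell_q}$ pointwise. Because the two indices now agree, Tonelli lets me commute $L_q(X)$ and $\ell_q$, so
\[
\norm[\big]{[M,M]^{1/2}_\infty}^p_{\L_pL_q}
\leq \norm[\big]{\Delta M}^p_{\L_p L_q \ell_q}
= \norm[\big]{\Delta M}^p_{\L_p \ell_q L_q}
= \E \biggl( \int \norm[\big]{g}_{L_q}^q\,d\mu \biggr)^{p/q}.
\]

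Next I decompose $\mu = \m + \nu$. Since $p/q \geq 1$, the elementary inequality $(a+b)^{p/q} \lesssim_{p,q} a^{p/q}+b^{p/q}$ gives
\[
\E\biggl(\int \norm[\big]{g}_{L_q}^q\,d\mu\biggr)^{p/q}
\lesssim_{p,q} \E\biggl|\int \norm[\big]{g}_{L_q}^q\,d\m\biggr|^{p/q}
+ \E\biggl(\int \norm[\big]{g}_{L_q}^q\,d\nu\biggr)^{p/q}.
\]
The second term is already one of the two norms on the right-hand side of the claim. For the first term, I apply the scalar inequality \eqref{eq:mejeto} with $H=\erre$ and exponent $p/q$, which is legitimate because $p/q \in [1,2]$ under the hypotheses $1 < q \leq p \leq 2$; taking the integrand $f := \norm{g}_{L_q}^q$ (which is real-valued and predictable) yields
\[
\E\biggl|\int \norm[\big]{g}_{L_q}^q\,d\m\biggr|^{p/q}
\lesssim_{p,q} \E\int \bigl(\norm[\big]{g}_{L_q}^q\bigr)^{p/q}\,d\nu
= \E\int \norm[\big]{g}_{L_q}^p\,d\nu,
\]
which is precisely the first term on the right-hand side. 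Combining the three displays delivers the desired bound, with the customary localization step from Proposition \ref{prop:uno} implicit so that Lenglart's inequality can be applied (or so that the scalar BDG bound is invoked on finite quantities).

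The only delicate point is the exponent bookkeeping in the passage from $\ell_2$ to $\ell_q$ together with the commutation of $L_q$ and $\ell_q$: both rely crucially on $q \leq 2$ and on the two indices being equal, so there is no appeal to \eqref{eq:HM} here, only to the trivial $\ell_s \hookrightarrow \ell_r$ for $r \leq s$ and to Tonelli. The verification that $p/q \in [1,2]$ — guaranteeing applicability of \eqref{eq:mejeto} — is immediate from $1 \leq q \leq p \leq 2$. I expect no genuine obstacle beyond these routine checks; the whole argument is a direct variation on the scheme used in Proposition \ref{prop:13}, the novelty being that one does not absorb the entire $d\mu$-integral into $d\nu$ via $\ell_2\hookrightarrow \ell_p$ (which is what fails when $q<p$), but instead keeps the $d\mu$-integral and estimates its compensated part by the scalar BDG inequality at the favorable exponent $p/q\in[1,2]$.
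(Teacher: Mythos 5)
Your argument is exactly the paper's proof: Theorem \ref{thm:iBDG}, the pointwise embedding $\ell_2\hookrightarrow\ell_q$ for $q\le 2$ followed by swapping $L_q$ and $\ell_q$, the decomposition $d\mu=d\m+d\nu$, and the scalar estimate at exponent $p/q\in[1,2]$ applied to the compensated part. Your citation of \eqref{eq:mejeto} is in fact the correct one (the paper's reference to \eqref{eq:mejo} at that step appears to be a typo, since only the single term $\E\int\norm{g}_{L_q}^p\,d\nu$ is produced), so the proposal is correct and essentially identical to the published argument.
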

\begin{proof}
  Appealing to Theorem \ref{thm:iBDG}, one has 
  \[
  \E\sup_{t \geq 0} \norm[\big]{(g \star \m)}_{L_q}^p \lesssim_{p,q}
  \E\norm[\big]{[M,M]_\infty^{1/2}}_{L_q}^p =
  \norm[\big]{\Delta M}_{\L_pL_q\ell_2}^p,
  \]
  where, since $q \leq 2$,
  \[
  \norm[\big]{\Delta M}_{\L_pL_q\ell_2}^p \leq 
  \norm[\big]{\Delta M}_{\L_pL_q\ell_q}^p =
  \norm[\big]{\Delta M}_{\L_p\ell_qL_q}^p = 
  \E\biggl(\int \norm[\big]{g}_{L_q}^q\,d\mu \biggr)^{p/q}.
  \]
  Writing
  \[
  \int \norm[\big]{g}_{L_q}^q\,d\mu = \int \norm[\big]{g}_{L_q}^q\,d\m
  + \int \norm[\big]{g}_{L_q}^q\,d\nu,
  \]
  the previous two inequalities yield
  \begin{align*}
  \E\sup_{t \geq 0} \norm[\big]{(g \star \m)}_{L_q}^p
  &\lesssim_{p,q} \E\biggl|\int \norm[\big]{g}_{L_q}^q\,d\m \biggr|^{p/q}
  + \E\biggl(\int \norm[\big]{g}_{L_q}^q\,d\nu \biggr)^{p/q}\\
  &\lesssim \E\int \norm[\big]{g}_{L_q}^p\,d\nu
  + \E\biggl(\int \norm[\big]{g}_{L_q}^q\,d\nu \biggr)^{p/q},
  \end{align*}
  where we have used inequality \eqref{eq:mejo} with exponent $p/q
  \leq 2$.
\end{proof}

\begin{prop}     \label{prop:rifatta}
  Let $1 < q \leq p \leq 2$. Then one has
  \[
  \E\sup_{t \geq 0} \norm[\big]{(g \star \m)_t}_{L_q}^p \lesssim_{p,q}
  \E\norm[\bigg]{\biggl( \int |g|^2\,d\nu \biggr)^{1/2}}_{L_q}^p
  \]
\end{prop}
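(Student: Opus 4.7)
My plan is a duality argument. Since $q \geq 1$ makes $\norm{M_t}_{L_q}$ a nonnegative submartingale, Doob's inequality (valid because $p > 1$) reduces the problem to proving $\norm{M_\infty}_{\L_p L_q} \lesssim_{p,q} \norm{\ip{M}{M}^{1/2}_\infty}_{\L_p L_q}$. By the duality $(\L_p L_q)^* \cong \L_{p'} L_{q'}$, one has $\norm{M_\infty}_{\L_p L_q} = \sup_\zeta \E\!\int M_\infty\,\zeta\,dx$ over $\zeta$ in the closed unit ball of $\L_{p'} L_{q'}$; for each such $\zeta$, take the closed martingale $N$ with $N_\infty = \zeta$ in the form $N = h \star \m$. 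The cross-bracket formula gives $\E\!\int M_\infty N_\infty\,dx = \E\!\int\!\!\int gh\,d\nu\,dx$, and Cauchy--Schwarz in $\nu$ (pointwise in $(\omega,x)$) followed by H\"older in $x$ with exponents $(q,q')$ and in $\omega$ with $(p,p')$ yields
\begin{equation*}
\E\!\int M_\infty\,\zeta\,dx \;\leq\;
\norm[\big]{\ip{M}{M}^{1/2}_\infty}_{\L_p L_q}\,
\norm[\big]{\ip{N}{N}^{1/2}_\infty}_{\L_{p'} L_{q'}}.
\end{equation*}

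The main obstacle is thus the \emph{dual} estimate $\norm{\ip{N}{N}^{1/2}_\infty}_{\L_{p'} L_{q'}} \lesssim \norm{N_\infty}_{\L_{p'} L_{q'}}$ in the conjugate range $2 \leq p' \leq q' < \infty$. I would establish it by decomposing $\ip{N}{N} = [N,N] - N'$, where $N' := |h|^2 \star \m$ is, pointwise in $x$, a real-valued purely discontinuous martingale. Since $\ip{N}{N}_\infty \leq [N,N]_\infty + |N'_\infty|$ pointwise, the inequality $\sqrt{a+b} \leq \sqrt{a} + \sqrt{b}$ together with the triangle inequalities in $L_{q'}$ and $\L_{p'}$ gives
\begin{equation*}
\norm[\big]{\ip{N}{N}^{1/2}_\infty}_{\L_{p'} L_{q'}} \;\lesssim\;
\norm[\big]{[N,N]^{1/2}_\infty}_{\L_{p'} L_{q'}} +
\norm[\big]{|N'_\infty|^{1/2}}_{\L_{p'} L_{q'}}.
\end{equation*}
The first summand is bounded by $\norm{N_\infty}_{\L_{p'} L_{q'}}$ via Theorem~\ref{thm:iBDG} and Doob. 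For the second, note that $\norm{|N'_\infty|^{1/2}}_{\L_{p'} L_{q'}}^2 = \norm{N'_\infty}_{\L_{p'/2} L_{q'/2}}$, and apply Theorem~\ref{thm:iBDG} to the martingale $N'$ at exponents $p'/2, q'/2 \geq 1$, obtaining a bound by $\norm{[N',N']^{1/2}_\infty}_{\L_{p'/2} L_{q'/2}}$. The elementary pointwise estimate
\begin{equation*}
\Bigl(\int |h|^4\,d\mu\Bigr)^{1/2} \;\leq\; \int |h|^2\,d\mu \;=\; [N,N]_\infty,
\end{equation*}
a consequence of $(\sum a_i^2)^{1/2} \leq \sum a_i$ for nonnegative $a_i$, then yields $\norm{[N',N']^{1/2}_\infty}_{\L_{p'/2} L_{q'/2}} \leq \norm{[N,N]^{1/2}_\infty}_{\L_{p'} L_{q'}}^2$, which, after another invocation of Theorem~\ref{thm:iBDG}, bounds the second summand by $\norm{N_\infty}_{\L_{p'} L_{q'}}$ as well.

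The endpoint values $p = 2$ or $q = 2$ (equivalently $p' = 2$ or $q' = 2$) lie on the boundary of the admissible range of Theorem~\ref{thm:iBDG} in the inner application to $N'$ and require separate treatment: the case $q = 2$ follows directly from the Hilbert-space estimate~\eqref{eq:sqh}, while $p = 2$, $q < 2$ is accessible by approximation from the interior. Combining the dual estimate with the duality pairing proves $\norm{M_\infty}_{\L_p L_q} \lesssim \norm{\ip{M}{M}^{1/2}_\infty}_{\L_p L_q}$, whence the initial Doob reduction yields the bound on $\norm{M^*_\infty}_{\L_p L_q}$ claimed in the proposition.
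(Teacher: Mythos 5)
Your proof is correct and follows the paper's argument almost verbatim: Doob plus duality reduces the claim to the dual estimate $\norm[\big]{\ip{N}{N}^{1/2}_\infty}_{\L_{p'}L_{q'}} \lesssim \norm[\big]{N_\infty}_{\L_{p'}L_{q'}}$ for $2 \le p' \le q'$, which the paper isolates as a separate Lemma and proves by exactly your decomposition $\int|h|^2\,d\nu = \int|h|^2\,d\mu - \int|h|^2\,d\m$, two applications of Theorem \ref{thm:iBDG} (to $N$ and to the auxiliary martingale $|h|^2\star\m$), and the pointwise inequality $\norm{\Delta N}_{\ell_4}\le\norm{\Delta N}_{\ell_2}$, which is your $(\sum a_i^2)^{1/2}\le\sum a_i$ in disguise. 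The only substantive difference is that you explicitly flag the endpoint $p'=2$, where the inner application of Theorem \ref{thm:iBDG} occurs at exponent $p'/2=1$ and falls outside the theorem's stated range; the paper passes over this point in silence, so your observation is if anything a refinement rather than a deviation.
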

For the proof we need the following estimate.
\begin{lemma}
  Let $2 \leq p \leq q$. Then
  \[
  \norm[\big]{\ip{M}{M}_\infty^{1/2}}_{\L_pL_q} \lesssim_{p,q} 
  \norm[\big]{M_\infty}_{\L_pL_q}.
  \]
\end{lemma}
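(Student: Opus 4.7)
The plan is to split $\ip{M}{M}_\infty = [M,M]_\infty - D_\infty$, where $D := [M,M]-\ip{M}{M}$ is, pointwise in $x\in X$, the real-valued purely discontinuous martingale $|g|^2 \ast \m$. Subadditivity of $\sqrt{\,\cdot\,}$ on $[0,\infty[$ yields, pointwise in $x$,
\[
\ip{M}{M}_\infty^{1/2} \leq [M,M]_\infty^{1/2} + |D_\infty|^{1/2},
\]
so I would take the $\L_pL_q$-norm and estimate the two summands on the right separately.

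The first summand is handled by Theorem~\ref{thm:iBDG} followed by Doob's maximal inequality applied to the real nonnegative submartingale $t \mapsto \norm{M_t}_{L_q}$, which gives
\[
\norm[\big]{[M,M]_\infty^{1/2}}_{\L_pL_q} \lesssim_{p,q}
\norm[\big]{M^*_\infty}_{\L_pL_q} \lesssim_p \norm[\big]{M_\infty}_{\L_pL_q}.
\]

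For the second summand I would rewrite $\norm{|D_\infty|^{1/2}}_{\L_pL_q} = \norm{D_\infty}_{\L_{p/2}L_{q/2}}^{1/2}$ and apply Theorem~\ref{thm:iBDG} at the exponents $(p/2, q/2)$ to the $L_{q/2}$-valued martingale $D$, reducing matters to a bound on $[D,D]_\infty^{1/2} = \bigl(\int |g|^4\,d\mu\bigr)^{1/2}$. The elementary pointwise estimate
\[
\biggl(\int |g|^4\,d\mu\biggr)^{1/2} \leq \sup_t |\Delta M_t|\cdot [M,M]_\infty^{1/2} \leq 2\,M^*_\infty \cdot [M,M]_\infty^{1/2},
\]
combined with H\"older's inequality in $\L_{p/2}L_{q/2}$ (applied iteratively, with factor exponents $q$ in the $L$-variable and $p$ in the $\L$-variable), yields
\[
\norm[\big]{[D,D]_\infty^{1/2}}_{\L_{p/2}L_{q/2}} \lesssim \norm[\big]{M^*_\infty}_{\L_pL_q}\,\norm[\big]{[M,M]_\infty^{1/2}}_{\L_pL_q} \lesssim_{p,q} \norm[\big]{M_\infty}_{\L_pL_q}^2.
\]
Taking a square root and combining with the first summand concludes the proof.

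The main technical obstacle is the boundary behaviour in the second step: when $p=2$ the exponents applied to $D$ drop to $(1, q/2)$, which is covered by Theorem~\ref{thm:iBDG} only when $q=2$ (via the stated $p=q=1$ clause). The missing subcase $p=2<q$ would have to be handled separately, either by approximating from $p>2$ or by invoking an $\L_1$-type BDG bound for purely discontinuous $L_{q/2}$-valued martingales; away from this boundary the argument is entirely routine.
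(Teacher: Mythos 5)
Your overall route coincides with the paper's: both split $\ip{M}{M}_\infty=\int|g|^2\,d\nu$ into $\int|g|^2\,d\mu=[M,M]_\infty$ plus the compensated part $|g|^2\star\m$, handle the first piece by Theorem \ref{thm:iBDG} and Doob's inequality, and apply Theorem \ref{thm:iBDG} again at the exponents $(p/2,q/2)$ to the $L_{q/2}$-valued martingale $D:=|g|^2\star\m$, reducing everything to a bound on $[D,D]_\infty^{1/2}=\bigl(\int|g|^4\,d\mu\bigr)^{1/2}$. Your remark about the boundary case $p=2<q$, where the second application of Theorem \ref{thm:iBDG} lands at the exponent pair $(1,q/2)$ not covered by its statement, is well taken --- the paper's own proof has exactly the same unaddressed issue.

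The one genuine problem is your treatment of $[D,D]_\infty^{1/2}$. The pointwise (in $x$) inequality $\bigl(\sum_t|\Delta M_t(x)|^4\bigr)^{1/2}\leq\sup_t|\Delta M_t(x)|\cdot[M,M]_\infty^{1/2}(x)$ is fine, but $\sup_t|\Delta M_t(x)|\leq 2\sup_t|M_t(x)|$ produces the \emph{pointwise-in-$x$} maximal function, not the real-valued process $\sup_t\norm{M_t}_{L_q}$ to which Theorem \ref{thm:iBDG} and Doob's inequality refer. After your H\"older step you therefore need $\bigl(\E\norm{\sup_t|M_t(\cdot)|}_{L_q}^p\bigr)^{1/p}\lesssim_{p,q}\norm{M_\infty}_{\L_pL_q}$, which is a vector-valued Doob (Fefferman--Stein type) inequality: true for $1<p,q<\infty$, but not among the paper's tools and not ``routine'' in this elementary framework. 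The fix is immediate and recovers the paper's argument: bound $\sup_t|\Delta M_t(x)|\leq[M,M]_\infty^{1/2}(x)$ instead, i.e. use $\norm{\Delta M}_{\ell_4}\leq\norm{\Delta M}_{\ell_2}$ pointwise, so that $[D,D]_\infty^{1/2}\leq[M,M]_\infty$ and hence $\norm[\big]{[D,D]_\infty^{1/2}}_{\L_{p/2}L_{q/2}}\leq\norm[\big]{[M,M]_\infty^{1/2}}_{\L_pL_q}^2\lesssim_{p,q}\norm{M_\infty}_{\L_pL_q}^2$, with no maximal function and no H\"older needed.
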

\begin{proof}
  One has
  \begin{align*}
    \E\norm[\big]{\ip{M}{M}_\infty^{1/2}}^p_{L_q} &=
    \E\norm[\big]{\ip{M}{M}_\infty}^{p/2}_{L_{q/2}}\\
    &= \E \norm[\bigg]{\int |g|^2\,d\nu}^{p/2}_{L_{q/2}}
    \lesssim \E \norm[\bigg]{\int |g|^2\,d\m}^{p/2}_{L_{q/2}}
    + \E \norm[\bigg]{\int |g|^2\,d\mu}^{p/2}_{L_{q/2}},
  \end{align*}
  where, thanks to Theorem \ref{thm:iBDG} and Doob's inequality,
  \[
  \E \norm[\bigg]{\int |g|^2\,d\mu}^{p/2}_{L_{q/2}} =
  \E \norm[\big]{[M,M]_\infty}^{p/2}_{L_{q/2}} =
  \E \norm[\big]{[M,M]^{1/2}_\infty}^p_{L_q} \lesssim_{p,q}
  \E \norm[\big]{M_\infty}^p_{L_q}.
  \]
  Moreover, setting $N:=|g|^2 \ast \m$, one has, again by Theorem
  \ref{thm:iBDG},
  \begin{align*}
  \E \norm[\bigg]{\int |g|^2\,d\m}^{p/2}_{L_{q/2}} &\lesssim_{p,q}
  \E \norm[\big]{[N,N]_\infty^{1/2}}^{p/2}_{L_{q/2}}
  = \E \norm[\bigg]{\biggl( \int |g|^4\,d\mu \biggr)^{1/2}}^{p/2}_{L_{q/2}}\\
  &= \E \norm*{\Bigl(\sum \norm{\Delta M}^4\Bigr)^{1/2}}^{p/2}_{L_{q/2}}
  = \E \norm*{\norm[\big]{\Delta M}_{\ell_4}^2}^{p/2}_{L_{q/2}}\\
  &= \E \norm[\Big]{\norm[\big]{\Delta M}_{\ell_4}}^p_{L_q}
  \leq \E \norm[\Big]{\norm[\big]{\Delta M}_{\ell_2}}^p_{L_q}\\
  &= \E \norm[\big]{[M,M]_\infty^{1/2}}^p_{L_q}
  \lesssim_{p,q} \E \norm[\big]{M_\infty}^p_{L_q}.
  \qedhere
\end{align*}
\end{proof}

\begin{proof}[Proof of Proposition \ref{prop:rifatta}]
  We use a duality argument: let us write
  \[
  \norm[\big]{M_\infty}_{\L_pL_q} = 
  \sup_{N_\infty\in B_1} \E \int_X M_\infty N_\infty,
  \]
  where $B_1$ stands for the unit ball of $\L_{p'}L_{q'}$. Introduce
  the martingale $N$ defined by $N_t=\E[N_\infty|\mathcal{F}_t]$ for
  all $t \geq 0$. The identity $\E M_\infty N_\infty = \E
  \ip{M}{N}_\infty$, Kunita-Watanabe's inequality, H\"older's
  inequality, and the previous Lemma imply
  \begin{align*}
    \E\int_X M_\infty N_\infty &\leq \norm[\big]{\ip{M}{M}_\infty^{1/2}}_{\L_pL_q} 
    \, \norm[\big]{\ip{N}{N}_\infty^{1/2}}_{\L_{p'}L_{q'}}\\
    &\lesssim_{p,q} \norm[\big]{\ip{M}{M}_\infty^{1/2}}_{\L_pL_q} \,
          \norm[\big]{N_\infty}_{\L_{p'}L_{q'}}
    \leq \norm[\big]{\ip{M}{M}_\infty^{1/2}}_{\L_pL_q},
  \end{align*}
  whence the conclusion, because
  \[
  \ip{M}{M}_\infty = \int |g|^2\,d\nu.
  \qedhere
  \]
\end{proof}


\subsection{Case $1 < p \leq 2 \leq q$}
The upper bound in Theorem \ref{thm:m} with parameters $p$ and $q$
such that $1 < p \leq 2 \leq q$ follows by the next two Propositions.

\begin{prop}     \label{prop:bella}
  Let $q \geq 2$, $0 < p \leq q$. Then one has
  \[
  \E \sup_{t \leq T} \norm[\big]{(g \star \m)_t}_{L_q}^p
  \lesssim_{p,q} \E \biggl( \int \norm{g}_{L_q}^q \,d\nu \biggr)^{p/q}
  + \E \norm[\bigg]{\biggl( \int |g|^2 \,d\nu \biggr)^{1/2}}^p_{L_q}.
  \]
\end{prop}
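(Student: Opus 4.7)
The natural strategy mirrors the pattern already established in Proposition \ref{prop:uno}: deduce the inequality from the corresponding scalar estimate in Theorem \ref{thm:H} applied pointwise in $x \in X$, then pass from exponent $q$ down to exponent $p$ by Lenglart's domination inequality. Concretely, I would apply inequality \eqref{eq:mejo} with $H=\erre$ and exponent $q \in [2,\infty)$ to the real-valued integrand $g(\cdot,\cdot,\cdot,x)$ for each fixed $x$, obtaining
\[
\E\sup_{t\geq 0} \bigl|(g(\cdot,x) \star \m)_t\bigr|^q \lesssim_q \E\int |g(\cdot,x)|^q\,d\nu + \E\biggl(\int |g(\cdot,x)|^2\,d\nu\biggr)^{q/2}.
\]
Integrating in $x$ against $n$, swapping $\sup_t$ with $\int_X$ via the trivial pointwise bound $\int_X f_t\,dn \leq \int_X \sup_s f_s\,dn$ and swapping $\E$ with $\int_X$ via Tonelli, I would reach
\[
\E\sup_{t\geq 0} \norm[\big]{(g \star \m)_t}_{L_q}^q \lesssim_q \E\int \norm[\big]{g}_{L_q}^q\,d\nu + \E\norm[\bigg]{\biggl(\int |g|^2\,d\nu\biggr)^{1/2}}_{L_q}^q,
\]
which already proves the claim in the borderline case $p=q$.

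For the remaining range $0 < p < q$, I would proceed exactly as in the proof of Proposition \ref{prop:uno}: localize by replacing $g$ with $g\mathbf{1}_{[0,T]}$ for an arbitrary bounded stopping time $T$, so that the previous display holds with $\sup_{t \leq T}$ on the left and integrals up to $T$ on the right, and then apply Lenglart's domination inequality with exponent $r=p/q \in (0,1)$ to the nonnegative, predictable, increasing dominating process
\[
A_t := \int_0^t \norm[\big]{g}_{L_q}^q\,d\nu + \int_X \biggl(\int_0^t |g|^2\,d\nu\biggr)^{q/2}\,dn.
\]
The elementary subadditivity $(a+b)^r \leq a^r + b^r$ for $r \leq 1$ then splits the resulting $\E A_\infty^{p/q}$ into the two terms on the right-hand side of the claim.

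The only point requiring a moment of care is verifying that $A$ is predictable and increasing, so that Lenglart's theorem applies. Monotonicity is immediate from the fact that $\int_0^t |g|^2\,d\nu$ is pointwise nondecreasing in $t$, and predictability follows from the standard Fubini-type measurability of $(\omega,t,x) \mapsto \int_0^t |g|^2\,d\nu$. I do not anticipate any serious obstacle: the whole argument is essentially a direct transcription of the template laid down by Proposition \ref{prop:uno}, using this time the sharper scalar estimate \eqref{eq:mejo} valid in the regime $q \geq 2$.
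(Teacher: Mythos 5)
Your argument is correct and is essentially identical to the paper's own proof: the author likewise applies inequality \eqref{eq:mejo} with $H=\erre$ and exponent $q$, integrates over $X$ to get the case $p=q$, and then passes to $0<p<q$ via Lenglart's domination inequality. Your added remarks on the predictability of the dominating process and the subadditivity of $a\mapsto a^{p/q}$ are just the details the paper leaves implicit.
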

\begin{proof}
  Inequality \eqref{eq:mejo}, with exponent $q \geq 2$ and $H=\erre$, and
  integration over $X$ yield
  \begin{equation}     \label{eq:mejo_Lp}
  \E \sup_{t \leq T} \norm[\big]{(g \star \m)_t}_{L_q}^q
  \lesssim_q \E \int \norm{g}_{L_q}^q \,d\nu
  + \E \norm[\bigg]{\biggl( \int |g|^2 \,d\nu \biggr)^{1/2}}^q_{L_q},
  \end{equation}
  therefore, by Lenglart's domination inequality,
  \begin{equation*}
  \E \sup_{t \leq T} \norm[\big]{(g \star \m)_t}_{L_q}^p
  \lesssim_{p,q} \E \biggl( \int \norm{g}_{L_q}^q \,d\nu \biggr)^{p/q}
  + \E \norm[\bigg]{\biggl( \int |g|^2 \,d\nu \biggr)^{1/2}}^p_{L_q}.
  \qedhere
  \end{equation*}
\end{proof}

\begin{prop}     \label{prop:due}
  Let $1 < p \leq 2 \leq q$. Then one has
  \[
  \E\sup_{t \geq 0} \norm[\big]{(g \ast \m)_t}_{L_q}^p \lesssim_{p,q}
  \E\int \norm{g}^p_{L_q}\,d\nu.
  \]
\end{prop}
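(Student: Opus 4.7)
The plan is to imitate the proof of Proposition \ref{prop:13} essentially verbatim, since the only arithmetic conditions actually used there are $p \leq 2$ and $p \leq q$, both of which hold in the present regime $1 < p \leq 2 \leq q$.

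First I would apply Theorem \ref{thm:iBDG} to control the left-hand side by the quadratic variation:
\[
\norm[\big]{(g \star \m)^*_\infty}_{\L_pL_q} \lesssim_{p,q}
\norm[\big]{[M,M]_\infty^{1/2}}_{\L_pL_q}
= \norm[\big]{\Delta M}_{\L_pL_q\ell_2},
\]
where the final equality rewrites $[M,M]_\infty^{1/2}$ as an $\ell_2$-norm of jumps.

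Next, since $p \leq 2$ the embedding $\ell_p \hookrightarrow \ell_2$ gives $\norm{\Delta M}_{\ell_2} \leq \norm{\Delta M}_{\ell_p}$, so
\[
\norm[\big]{\Delta M}^p_{\L_pL_q\ell_2} \leq
\norm[\big]{\Delta M}^p_{\L_pL_q\ell_p}.
\]
Then, because $q \geq p$, the H\"older--Minkowski inequality \eqref{eq:HM} allows me to commute the inner $\ell_p$ with the middle $L_q$:
\[
\norm[\big]{\Delta M}^p_{\L_pL_q\ell_p} \leq
\norm[\big]{\Delta M}^p_{\L_p\ell_pL_q}
= \E\int \norm[\big]{g}^p_{L_q}\,d\mu.
\]
Finally, since $(\omega,t,z)\mapsto\norm{g(\omega,t,z,\cdot)}_{L_q}^p$ is predictable, the compensation identity $\E\int \norm{g}_{L_q}^p\,d\mu = \E\int \norm{g}_{L_q}^p\,d\nu$ closes the estimate.

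There is no real obstacle here: the only thing worth checking is that both of the key rearrangement steps retain their validity, namely that the $\ell_p \hookrightarrow \ell_2$ embedding survives the passage from $p\leq q\leq 2$ to $p\leq 2\leq q$ (it does, since it depends only on $p\leq 2$), and that the H\"older--Minkowski swap continues to apply (it does, since it depends only on $q\geq p$). The predictability-based passage from $d\mu$ to $d\nu$ is implicitly justified by the localization convention recorded in the remark following Proposition \ref{prop:uno}.
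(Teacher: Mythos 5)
Your proof is correct and essentially the same as the paper's: both start from Theorem \ref{thm:iBDG} and reduce to $\norm[\big]{\Delta M}_{\L_p\ell_pL_q}$ using exactly the two ingredients you name, the only (immaterial) difference being the order in which they are applied — the paper first swaps $L_q$ and $\ell_2$ via \eqref{eq:HM} (using $q\geq 2$) and then invokes $\ell_p\hookrightarrow\ell_2$ for $L_q$-valued sequences, whereas you embed $\ell_p\hookrightarrow\ell_2$ pointwise first and then swap $L_q$ and $\ell_p$ (using $q\geq p$). Both orderings are valid in the regime $1<p\leq 2\leq q$.
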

\begin{proof}
  By Theorem \ref{thm:iBDG} and H\"older-Minkowski's inequality
  \eqref{eq:HM}, one has
  \[
  \norm[\big]{(g \ast \m)^*_\infty}_{\L_pL_q} \lesssim_{p,q}
  \norm[\big]{[M,M]_\infty^{1/2}}_{\L_pL_q} =
  \norm[\big]{\Delta M}_{\L_pL_q\ell_2}
  \leq \norm[\big]{\Delta M}_{\L_p\ell_2L_q} 
  \leq \norm[\big]{\Delta M}_{\L_p\ell_pL_q},
  \]
  where
  \[
  \norm[\big]{\Delta M}_{\L_p\ell_pL_q}^p =
  \E\sum \norm[\big]{\Delta M}_{L_q}^p =
  \E\int \norm[\big]{g}_{L_q}^p\,d\mu =
  \E\int \norm[\big]{g}_{L_q}^p\,d\nu.
  \qedhere
  \]
\end{proof}


\subsection{Case $1 < q \leq 2 \leq p$}
The upper bound in Theorem \ref{thm:m} with parameters $p$ and $q$
such that $1 < q \leq 2 \leq p$ is a consequence of the next two
Propositions.

\begin{prop}     \label{lm:38}
  Let $1 < q \leq 2 \leq p$. Then one has
  \[
  \E\sup_{t \geq 0} \norm[\big]{(g \star \m)_t}_{L_q}^p \lesssim_{p,q}
  \E\biggl( \int \norm{g}_{L_q}^q\,d\nu \biggr)^{p/q}
  + \E \int \norm{g}_{L_q}^p\,d\nu.
  \]
\end{prop}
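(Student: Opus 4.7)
The plan is to mimic the template of Proposition~\ref{lm:34}, adapted to the regime $q\leq 2\leq p$. Theorem~\ref{thm:iBDG} reduces the estimate to controlling $\norm{[M,M]^{1/2}_\infty}_{\L_pL_q}=\norm{\Delta M}_{\L_pL_q\ell_2}$, and since $q\leq 2$ the inclusion $\ell_q\hookrightarrow\ell_2$ gives $\norm{\Delta M}_{\L_pL_q\ell_2}\leq \norm{\Delta M}_{\L_pL_q\ell_q}$. Swapping the two coincident $L_q$-norms yields
\[
\norm{\Delta M}^p_{\L_pL_q\ell_q}
= \E\biggl(\int\norm{g}^q_{L_q}\,d\mu\biggr)^{p/q},
\]
and decomposing $d\mu=d\m+d\nu$ together with $(a+b)^{p/q}\lesssim_{p,q} a^{p/q}+b^{p/q}$ peels off the desired term $\E(\int\norm{g}^q_{L_q}\,d\nu)^{p/q}$ and reduces the argument to bounding $\E|\int h\,d\m|^{p/q}$, where $h:=\norm{g}^q_{L_q}$ is a nonnegative, real-valued, predictable integrand.

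For the scalar martingale $h\star\m$ I would apply Theorem~\ref{thm:H} with $H=\erre$ and exponent $r:=p/q\geq 1$. If $p\leq 2q$ (so $r\in[1,2]$), \eqref{eq:mejeto} directly gives
\[
\E\bigl|(h\star\m)_\infty\bigr|^r \lesssim_r \E\int h^r\,d\nu
= \E\int\norm{g}^p_{L_q}\,d\nu,
\]
which is the first term on the claimed right-hand side, and the proof is complete. If instead $p>2q$ (so $r>2$), one is forced to use \eqref{eq:mejo}, which supplies the same $\E\int\norm{g}^p_{L_q}\,d\nu$ term but additionally produces the extraneous contribution
\[
\E\biggl(\int h^2\,d\nu\biggr)^{r/2}
= \E\biggl(\int\norm{g}^{2q}_{L_q}\,d\nu\biggr)^{p/(2q)}.
\]

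Disposing of this spurious term is the main (and essentially only) obstacle. Since $p\geq 2q$ forces $q\leq 2q\leq p$, a pointwise H\"older interpolation gives
\[
\int\norm{g}^{2q}_{L_q}\,d\nu
\leq \biggl(\int\norm{g}^q_{L_q}\,d\nu\biggr)^{\!(p-2q)/(p-q)}
\biggl(\int\norm{g}^p_{L_q}\,d\nu\biggr)^{\!q/(p-q)}.
\]
Raising to the power $p/(2q)$ produces a product $A^{\lambda}B^{1-\lambda}$, with $A:=(\int\norm{g}^q_{L_q}\,d\nu)^{p/q}$, $B:=\int\norm{g}^p_{L_q}\,d\nu$, $\lambda:=(p-2q)/(2(p-q))$, and $1-\lambda=p/(2(p-q))$; the two exponents genuinely sum to $1$. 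Young's inequality then gives $A^{\lambda}B^{1-\lambda}\leq A+B$, which, upon taking expectation, is precisely the two-term bound stated in the proposition. The whole argument pivots on this H\"older--Young bookkeeping, elementary but relying essentially on the condition $p\geq 2q$ under which \eqref{eq:mejo} was invoked in the first place.
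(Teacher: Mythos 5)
Your proof is correct and follows essentially the same route as the paper's: reduce via Theorem~\ref{thm:iBDG} and the $\ell_q\hookrightarrow\ell_2$ embedding to $\E\bigl(\int\norm{g}^q_{L_q}\,d\mu\bigr)^{p/q}$, split $d\mu=d\m+d\nu$, treat the compensated term with \eqref{eq:mejeto} or \eqref{eq:mejo} according to whether $p/q\leq 2$, and absorb the extraneous $L_{2q}(\nu)$ term by interpolation between $L_q(\nu)$ and $L_p(\nu)$. The only cosmetic difference is that you rederive that last interpolation step by hand via Lyapunov and Young, whereas the paper invokes Lemma~\ref{lm:stimette} (whose proof is exactly that computation).
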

\begin{proof}
  Proceeding as in the proof of Proposition \ref{lm:34}, one obtains
  \[
  \E\norm[\big]{(g \star \m)_\infty^*}_{L_q}^p \lesssim_{p,q}
  \E\biggl( \int \norm[\big]{g}_{L_q}^q \,d\mu \biggr)^{p/q}
  \lesssim \E\biggl| \int \norm[\big]{g}_{L_q}^q \,d\m \biggr|^{p/q}
  + \E\biggl( \int \norm[\big]{g}_{L_q}^q \,d\nu \biggr)^{p/q}.
  \]
  If $p \leq 2q$, i.e. if $p/q \leq 2$, the second inequality in
  Theorem \ref{thm:H} (with $H=\erre$) yields
  \[
  \E\biggl| \int \norm[\big]{g}_{L_q}^q \,d\m \biggr|^{p/q}
  \lesssim_{p,q} \E\int \norm[\big]{g}_{L_q}^p \,d\nu,
  \]
  as desired. Otherwise, if $p > 2q$, i.e. if $p/q>2$, the third
  inequality in Theorem \ref{thm:H} (with $H=\erre$) yields
  \begin{align*}
    \E\biggl| \int \norm[\big]{g}_{L_q}^q \,d\m \biggr|^{p/q}
    &\lesssim \E\int \norm[\big]{g}_{L_q}^p \,d\nu
    + \E\biggl( \int \norm[\big]{g}_{L_q}^{2q}\,d\nu \biggr)^{p/(2q)}\\
    &= \E\int \norm[\big]{g}_{L_q}^p \,d\nu
    + \E\norm*{\norm[\big]{g}_{L_q}}^p_{L_{2q}(\nu)}.
  \end{align*}
  Since we have $q < 2q < p$, by Lemma \ref{lm:stimette} one
  immediately gets
  \begin{align*}
  \E\norm*{\norm[\big]{g}_{L_q}}^p_{L_{2q}(\nu)} &\leq
  \E\norm*{\norm[\big]{g}_{L_q}}^p_{L_{q}(\nu)}
  + \E\norm*{\norm[\big]{g}_{L_q}}^p_{L_{p}(\nu)}\\
  &= \E\biggl( \int \norm[\big]{g}_{L_q}^q \,d\nu \biggr)^{p/q}
  + \E\int \norm[\big]{g}_{L_q}^p \,d\nu,
  \end{align*}
  and the proof is concluded.
\end{proof}

\begin{prop}     \label{prop:q2p}
  Let $1 \leq q \leq 2 \leq p$. Then one has
  \[
  \E\sup_{t \geq 0} \norm[\big]{(g \star \m)_t}_{L_q}^p \lesssim_{p,q}
  \E\norm[\bigg]{\biggl( \int |g|^2\,d\nu \biggr)^{1/2}}_{L_q}^p
  + \E \int \norm{g}_{L_q}^p\,d\nu.
  \]
\end{prop}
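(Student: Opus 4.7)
The plan is to apply Theorem~\ref{thm:iBDG} to reduce the estimate to bounding $\E\norm{[M,M]^{1/2}_\infty}^p_{L_q}$, and then to split the quadratic variation pointwise in $x\in X$ via the identity $[M,M]_\infty = \int|g|^2\,d\m + \int|g|^2\,d\nu$. Using the elementary inequality $(a+b)^{1/2}\leq |a|^{1/2}+b^{1/2}$ (valid whenever $b\geq 0$ and $a+b\geq 0$), followed by the triangle inequality in $L_q(X)$ (available because $q\geq 1$) and by $(\alpha+\beta)^p\lesssim_p \alpha^p+\beta^p$ after expectation, one obtains
\[
\E\norm{[M,M]^{1/2}_\infty}^p_{L_q}
\lesssim_p
\E\norm[\bigg]{\Bigl|\int|g|^2\,d\m\Bigr|^{1/2}}^p_{L_q}
+ \norm{g}^p_{\tilde{L}_{p,q}},
\]
so the $\tilde{L}_{p,q}$-contribution to the right-hand side comes directly from the second summand.

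It remains to bound the first summand, which equals $\E\norm{\tilde N_\infty}^{p/2}_{L_{q/2}}$, where $\tilde N:=|g|^2\star\m$ is, pointwise in $x\in X$, a real-valued purely discontinuous martingale with jumps $|g(x)|^2$. I would apply Theorem~\ref{thm:H} pointwise in $x$ --- inequality \eqref{eq:mejeto} with exponent $p/2$ when $p\in[2,4]$, or \eqref{eq:mejo} when $p>4$ --- to get $\E|\tilde N_\infty(x)|^{p/2}\lesssim_p \E\int|g(x)|^p\,d\nu$ (with an additional term $\E(\int|g(x)|^4\,d\nu)^{p/4}$ when $p>4$). Minkowski's integral inequality, available because $p/q\geq 1$, then permits one to pass the $L_{q/2}(X)$-integration inside the $\L_{p/2}(\Omega)$-norm; after rearranging the resulting iterated norm into the order $\L_p(\Omega)\,L_p(\nu)\,L_q(X)$ characteristic of $L_{p,p,q}$, one is led to the desired bound $\norm{g}^p_{L_{p,p,q}}$. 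The auxiliary square-function contribution present in the range $p>4$ is absorbed into $\norm{g}^p_{L_{p,p,q}}+\norm{g}^p_{\tilde{L}_{p,q}}$ via Lemma~\ref{lm:stimette} applied with exponents $q<2q<p$, exactly as in the proof of Proposition~\ref{lm:38}.

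The delicate point is the Minkowski-and-rearrangement step just described. A naive application of Minkowski's integral inequality produces only the mixed norm $\norm{g}^p_{L_q(X)\,\L_p(\Omega)\,L_p(\nu)}$, which by H\"older--Minkowski \eqref{eq:HM} \emph{dominates}, rather than coincides with, the target $\norm{g}^p_{L_{p,p,q}}=\norm{g}^p_{\L_p(\Omega)\,L_p(\nu)\,L_q(X)}$. Bridging this gap is the main obstacle; it can be resolved either by a sharper direct exploitation of the non-negativity of the integrand $|g|^2$ and of the specific martingale structure of $\tilde N$, or alternatively via a duality argument dual to that of Proposition~\ref{prop:rifatta}, pairing $g$ against a martingale $N=h\star\m$ with indices $(p',q')$ in the dual range $1<p'\leq 2\leq q'$, which is already covered by Propositions~\ref{prop:bella} and \ref{prop:due}.
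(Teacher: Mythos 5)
Your opening reduction is fine as far as it goes: splitting $[M,M]_\infty=\int|g|^2\,d\m+\int|g|^2\,d\nu$ and using the triangle inequality in $L_q$ does isolate the $\tilde{L}_{p,q}$ term correctly. But the proof then stalls at exactly the point you flag, and that point is not a technicality --- it is the whole difficulty of this case. Applying Theorem \ref{thm:H} pointwise in $x$ to $\tilde N=|g|^2\star\m$ and then integrating via Minkowski necessarily produces the mixed norm with $L_q(X)$ \emph{outermost}, and by \eqref{eq:HM} (with $p\geq q$) that norm dominates $\norm{g}_{L_{p,p,q}}$; an upper bound by a larger quantity proves nothing. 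Neither of your proposed repairs closes this gap. The ``sharper direct exploitation of non-negativity'' is not specified, and I do not see how positivity of $|g|^2$ lets you commute $\E$ past $\int_X(\cdot)^{q/2}$ in the required direction. The duality route is circular: to bound $\E(M_\infty,\zeta)=\E\ip{M}{N}_\infty$ for $\zeta$ in the unit ball of $\L_{p'}L_{q'}$ you would need a \emph{lower} bound of the form $\norm{h}_{\mathcal{I}_{p',q'}}\lesssim\norm{N_\infty}_{\L_{p'}L_{q'}}$ for the dual exponents $1<p'\leq 2\leq q'$, whereas Propositions \ref{prop:bella} and \ref{prop:due} give the \emph{upper} bounds in that range --- and in this paper the lower bounds are themselves obtained by dualizing the upper bounds, so you would be assuming what you are trying to prove.

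The paper's proof takes an entirely different route precisely to avoid this obstruction: a Davis decomposition $M=K+L$, where $K$ collects the large jumps ($\norm{\Delta M_s}_{L_q}>2S_{t-}$ with $S_t=\sup_{s\leq t}\norm{\Delta M_s}_{L_q}$) and is controlled in total variation by $\E S_\infty^p\leq\E\int\norm{g}_{L_q}^p\,d\nu$, which is where the $L_{p,p,q}$ term actually comes from. For the remaining part $L$, one proves the increment estimate $\E(L^*_{T-}-L^*_{S-})^q\lesssim\E\norm{\ip{L}{L}_{T-}^{1/2}}_{L_q}^q+\E S_{T-}^q$ at the \emph{low} exponent $q\leq 2$ (where Theorem \ref{thm:iBDG} and the passage from $[L,L]$ to the predictable bracket $\ip{L}{L}$ are available), and then the Lenglart--L\'epingle--Pratelli domination lemma upgrades this to the exponent $p$, yielding $\E(L^*_\infty)^p\lesssim\E\norm{\ip{L}{L}_\infty^{1/2}}_{L_q}^p+\E S_\infty^p$ with $\ip{L}{L}_\infty\lesssim\int|g|^2\,d\nu$. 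That stopping-time bootstrap is the idea your argument is missing; without it, or a genuinely new substitute for it, the proposal does not prove the proposition.
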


For the proof we need the following result by Lenglart, Lepingle and
Pratelli (see \cite[Lemma 1.1]{LeLePr}).
\begin{lemma}
  Let $A$ and $B$ be increasing adapted processes. If there exist
  $r>0$ and $\alpha>0$ such that
  \[
  \E(A_{T-}-A_{S-})^r \leq \alpha \E B_{T-}^r \mathbf{1}_{\{S<T\}}
  \]
  for all stopping times $S$ and $T$ such that $S<T$, then one has,
  for any moderate function $F$,
  \[
  \E F(A_\infty) \lesssim_{r,\alpha,F} \E F(B_\infty).
  \]
\end{lemma}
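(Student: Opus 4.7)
I would follow the classical route behind Lenglart-type domination: convert the moment hypothesis into a good-$\lambda$ inequality for the distributions of $A_\infty$ and $B_\infty$, and then integrate it against the Stieltjes measure $dF$, using the moderate growth of $F$ to absorb a self-term on the left-hand side.

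\smallskip

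\emph{Step 1 (good-$\lambda$ inequality).} Fix $\mu>0$ and parameters $\beta>1$, $\delta\in\mathopen]0,1\mathclose[$ to be chosen later, and introduce the stopping times
\[
S:=\inf\{t\geq 0 : A_t>\mu\},\qquad
T:=\inf\{t : A_t>\beta\mu\}\wedge\inf\{t : B_t\geq\delta\mu\}.
\]
On the event $\Gamma:=\{A_\infty>\beta\mu,\,B_\infty<\delta\mu\}$ the $B$-threshold is never reached, hence $T=\inf\{t:A_t>\beta\mu\}<\infty$ and $S<T$, with $A_{S-}\leq\mu$ and (modulo the jump subtlety described at the end) $A_{T-}\geq\beta\mu$; moreover $B_{T-}\leq\delta\mu$ in general, and $\{S<T\}\subseteq\{S<\infty\}=\{A_\infty>\mu\}$. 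Plugging these bounds into the hypothesis gives
\[
((\beta-1)\mu)^r\,\P(\Gamma)\;\leq\;\E(A_{T-}-A_{S-})^r\;\leq\;\alpha\,\E B_{T-}^r\mathbf{1}_{\{S<T\}}\;\leq\;\alpha(\delta\mu)^r\,\P(A_\infty>\mu),
\]
whence
\[
\P(A_\infty>\beta\mu,\,B_\infty<\delta\mu)\;\leq\;\alpha\Bigl(\frac{\delta}{\beta-1}\Bigr)^{\!r}\P(A_\infty>\mu).
\]

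\smallskip

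\emph{Step 2 (integration against $dF$).} Splitting $\P(A_\infty>\beta\mu)\leq\P(B_\infty\geq\delta\mu)+\P(A_\infty>\beta\mu,\,B_\infty<\delta\mu)$, integrating against $dF(\mu)$, and using the identity $\int_0^\infty\P(X>c\mu)\,dF(\mu)=\E F(X/c)$ together with the moderate growth $F(cx)\leq C(c)F(x)$ (applied with $c=\beta$ to pass from $\E F(A_\infty/\beta)$ to $\E F(A_\infty)$, and with $c=1/\delta$ to pass from $\E F(B_\infty/\delta)$ back to $\E F(B_\infty)$), one obtains
\[
C(\beta)^{-1}\E F(A_\infty)\;\leq\;C(\delta^{-1})\,\E F(B_\infty)+\alpha\Bigl(\frac{\delta}{\beta-1}\Bigr)^{\!r}\E F(A_\infty).
\]
Fixing (say) $\beta=2$ and then choosing $\delta$ so small that $\alpha(\delta/(\beta-1))^r C(\beta)<1$, the self-term on the right can be absorbed into the left-hand side, yielding $\E F(A_\infty)\lesssim_{r,\alpha,F}\E F(B_\infty)$. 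A preliminary truncation (replace $A$ by $A\wedge k$ and let $k\to\infty$ by monotone convergence) ensures the absorption step is legitimate even when $\E F(A_\infty)$ is not a priori finite.

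\smallskip

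\emph{Main obstacle.} The delicate point is the lower bound $A_{T-}\geq\beta\mu$ used on $\Gamma$: if $A$ makes a jump crossing the level $\beta\mu$ exactly at $T$, then only $A_T\geq\beta\mu$ holds while $A_{T-}<\beta\mu$, so the clean estimate $A_{T-}-A_{S-}\geq(\beta-1)\mu$ fails. In Lenglart--Lepingle--Pratelli this is resolved by a careful treatment of the jump of $A$ at $T$ (e.g.\ splitting according to predictable vs.\ totally inaccessible parts of the graph of $T$, or perturbing the threshold infinitesimally to isolate the offending jump), which is the only genuine technicality in an otherwise routine argument; I would adopt one of those devices at the cost of a slightly worse constant.
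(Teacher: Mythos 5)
The paper offers no proof of this lemma: it is imported verbatim from Lenglart, L\'epingle and Pratelli \cite[Lemma 1.1]{LeLePr}, so there is no in-paper argument to compare yours against; your good-$\lambda$ route is essentially the classical one behind that reference. Modulo the technicality you flag yourself, the argument is correct: Step 2 (integration of the distribution inequality against $dF$, moderate growth of $F$, truncation of $A$ to legitimise the absorption) is standard, and the truncated process $A\wedge k$ does still satisfy the hypothesis because $a\wedge k-b\wedge k\le a-b$ for $a\ge b$. The obstacle in Step 1 is real but is closed exactly by the ``infinitesimal perturbation'' you allude to, and it costs nothing to make it explicit: fix $\epsilon>0$ and take
\[
T:=\bigl(T'+\epsilon\bigr)\wedge\inf\{t: B_t\ge\delta\mu\},\qquad T':=\inf\{t:A_t>\beta\mu\}.
\]
On $\Gamma=\{A_\infty>\beta\mu,\ B_\infty<\delta\mu\}$ the $B$-threshold time is infinite, so $T=T'+\epsilon$ and $A_{T-}\ge A_{T'}\ge\beta\mu$ by right-continuity, while $B_{T-}\le\delta\mu$ persists because $T$ never exceeds the $B$-threshold time; moreover $S\le T'<T$ on $\Gamma$, which also disposes of the case, not mentioned in your write-up, where $A$ jumps over both levels $\mu$ and $\beta\mu$ simultaneously (there your original $S$ and $T$ coincide and the hypothesis returns nothing). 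Off $\Gamma$ one only needs $B_{T-}\le\delta\mu$ and $\{S<T\}\subseteq\{A_\infty>\mu\}$, both of which survive the shift (replace $S$ by $S\wedge T$ if the hypothesis is to be read for ordered pairs only). With this modification your good-$\lambda$ inequality holds with the constant you state and the rest of the proof goes through unchanged.
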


\begin{proof}[Proof of Proposition \ref{prop:q2p}]
  We shall proceed in several steps.

  \textsc{Step 1.} We introduce the Davis decomposition of $M:=g \star
  \m$: define the real-valued process $S_t:=\sup_{s \leq t}
  \norm{\Delta M_s}_{L_q}$, and set
  \[
  K^1_t := \sum_{\norm{\Delta M_s}>2S_{t-}} \Delta M_s,
  \qquad K^2:= \widetilde{K^1}, \qquad K:=K^1-K^2.
  \]
  Then there exists an $L_q$-valued predictable process $g'$ such that
  $K^1 = g' \star \mu$, hence $K^2 = g' \star \nu$ and $K= g' \star
  \m$. Now set $L=M-K = (g-g') \star \m$.

  \smallskip

  \textsc{Step 2.} Denoting the total variation by
  $\norm{\cdot}_{TV}$, one has
  \[
  M^*_\infty \leq K^*_\infty + L^*_\infty \leq \norm{K}_{TV} + L^*_\infty,
  \]
  hence, for any $p \geq 1$,
  \[
  \E\sup_{t \geq 0} \norm{M_t}_{L_q}^p \lesssim_p \E\norm{K}^p_{TV(L_q)}
  + \E\sup_{t \geq 0} \norm{L_t}_{L_q}^p,
  \]
  where
  \[
  \E\norm{K}^p_{TV} \lesssim_p \E\norm{K^1}^p_{TV} 
  + \E\norm{\widetilde{K^1}}^p_{TV}
  \lesssim \E\norm{K^1}^p_{TV},
  \]
  and $\norm{K^1}_{TV(L_q)} \lesssim S_\infty$ implies
  \[
  \E\norm{K}^p_{TV} \lesssim_p \E S_\infty^p.
  \]

  \smallskip

  \textsc{Step 3.} Let $S$, $T$ be any stopping times. Setting
  \[
  (L^{S,T})_t := \bigl(L_{(S+t) \wedge T} - L_{S-}\bigr) \mathbf{1}_{\{S<T\}},
  \]
  we are going to show that
  \[
  L_{T-}^* - L_{S-}^* \leq (L^{S,T})_\infty^* \mathbf{1}_{\{S<T\}}.
  \]
  Since $t \mapsto L^*_t$ is increasing and the right-hand side is
  positive, we can assume $S<T$ without loss of generality. If
  $L^*_{T-}=L^*_{S-}$, the inequality is obviously true, hence we can
  assume $L^*_{T-} > L^*_{S-}$, thus also
  \[
  L^*_{T-} \leq \sup_{S \leq s \leq T} \norm{L_s}_{L_q}.
  \]
  We therefore have, writing $\norm{\cdot}$ instead of
  $\norm{\cdot}_{L_q}$ for compactness of notation,
  \begin{align*}
  L^*_{T-} - L^*_{S-} &\leq 
  \bigl( \sup_{s\in[S,T]} \norm{L_s} - \sup_{s < S} \norm{L_s} \bigr)
  \mathbf{1}_{\{S<T\}}\\
  &\leq \bigl( \sup_{s\in[S,T]} \norm{L_s} - \norm{L_{S-}} \bigr)
  \mathbf{1}_{\{S<T\}}\\
  &\leq \bigl( \sup_{s \in [S,T]} \norm{L_s - L_{S-}}\bigr) 
   \mathbf{1}_{\{S<T\}}
  = (L^{S,T})_\infty^* \mathbf{1}_{\{S<T\}},
  \end{align*}
  where we have used the obvious estimates $\sup_{s<S} \norm{L_s} \geq
  \norm{L_{S-}}$ and $\norm{x}-\norm{y} \leq \norm{x-y}$. 

  \smallskip

  \textsc{Step 4.} The previous step immediately implies
  \[
  \E \bigl( L_{T-}^* - L_{S-}^* \bigr)^q 
  \lesssim \E \bigl( (L^{S,T})_\infty^* \bigr)^q \mathbf{1}_{\{S<T\}},
  \]
  By Theorem \ref{thm:iBDG} we have
  \[
  \E \bigl( (L^{S,T})_\infty^* \bigr)^q \mathbf{1}_{\{S<T\}} \lesssim_q
  \E \norm[\big]{[L^{S,T},L^{S,T}]_\infty^{1/2}}_{L_q}^q,
  \]
  where
  \[
  [L^{S,T},L^{S,T}]_\infty \leq [L,L]_T + \norm{\Delta L_S}^2
  \leq [L,L]_{T-} + \norm{\Delta L_S}^2 + \norm{\Delta L_T}^2,
  \]
  hence
  \[
  \norm[\big]{[L^{S,T},L^{S,T}]_\infty^{1/2}}_{L_q} \leq
  \norm[\big]{[L,L]_{T-}^{1/2}}_{L_q} + \norm{\Delta L_S}_{L_q}
  + \norm{\Delta L_T}_{L_q}
  \lesssim \norm[\big]{[L,L]_{T-}^{1/2}}_{L_q} + S_{T-},
  \]
  therefore also, recalling that $q \leq 2$,
  \[
  \E \bigl( L_{T-}^* - L_{S-}^* \bigr)^q \lesssim_q
  \E \norm[\big]{[L,L]_{T-}^{1/2}}_{L_q}^q + \E S_{T-}^q \lesssim_q
  \E \norm[\big]{\ip{L}{L}_{T-}^{1/2}}_{L_q}^q + \E S_{T-}^q.
  \]
  We can now apply the previous Lemma to obtain
  \[
  \E (L^*_\infty)^p \lesssim_{p,q}
  \E \norm[\big]{\ip{L}{L}_\infty^{1/2}}_{L_q}^p + \E S_\infty^p.
  \]

  \smallskip

  \textsc{Step 5.} Recalling the estimate on $K$, we are left with
  \[
  \E (M^*_\infty)^p \lesssim
  \E \norm[\big]{\ip{L}{L}_\infty^{1/2}}_{L_q}^p + \E S_\infty^p,
  \]
  where, since $L=(g-g') \star \m$ and $|g'| \leq |g|$ pointwise (in
  fact by construction $g'$ represents some jumps of $M$ only),
  \[
  \ip{L}{L}_\infty = \int |g-g'|^2\,d\nu \lesssim \int |g|^2\,d\nu,
  \]
  and
  \[
  \E S_\infty^p = \E \sup_{s\geq 0} \norm{\Delta M_s}^p_{L_q} 
  \leq \E \sum \norm{\Delta M}^p_{L_q}
  = \E \int \norm{g}_{L_q}^p\,d\mu
  = \E \int \norm{g}_{L_q}^p\,d\nu.
  \qedhere
  \]
\end{proof}


\subsection{Case $2 \leq p \leq q$}
The upper bound in Theorem \ref{thm:m} with parameters $p$ and $q$
such that $2 \leq p \leq q$ follows by Proposition \ref{prop:bella}
and by the next Proposition.

\begin{prop}
  Let $2 \leq p \leq q$. Then one has
  \[
  \E\sup_{t \geq 0} \norm[\big]{(g \star \m)_t}^p_{L_q} \lesssim_{p,q}
  \E\int \norm[\big]{g}_{L_q}^p\,d\nu
  + \E\norm[\bigg]{\biggl( \int |g|^2\,d\nu\biggr)^{1/2}}_{L_q}.
  \]
\end{prop}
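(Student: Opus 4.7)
The plan is to apply Theorem \ref{thm:iBDG} twice, in sequence: first to reduce the claim to an estimate on $\norm[\big]{[M,M]_\infty^{1/2}}_{\L_pL_q}$, and then, after splitting the quadratic variation into its compensator and a new purely discontinuous martingale, to control the latter at halved exponents. Since $[M,M]_\infty = N_\infty + \ip{M}{M}_\infty \geq 0$ with $N := |g|^2 \star \m$, the elementary pointwise inequality $(a+b)^{1/2} \leq |a|^{1/2} + b^{1/2}$ (which holds whenever $a + b \geq 0$) yields, after taking $\L_pL_q$ norms,
\[
\norm[\big]{[M,M]_\infty^{1/2}}_{\L_pL_q}
\leq \norm[\big]{|N_\infty|^{1/2}}_{\L_pL_q}
+ \norm[\Big]{\ip{M}{M}_\infty^{1/2}}_{\L_pL_q},
\]
and the second summand is exactly $\norm[\big]{(\int|g|^2\,d\nu)^{1/2}}_{\L_pL_q}$, one of the two terms in the claim. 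For the first summand, I would rewrite
\[
\norm[\big]{|N_\infty|^{1/2}}_{\L_pL_q}^2 = \norm{N_\infty}_{\L_{p/2}L_{q/2}}
\]
and apply Theorem \ref{thm:iBDG} to $N$ at the halved exponents $(p/2,q/2)$. Since $[N,N]_\infty = \int|g|^4\,d\mu$, this produces
\[
\norm[\big]{|N_\infty|^{1/2}}_{\L_pL_q} \lesssim_{p,q} \norm[\Big]{\norm{\Delta M}_{\ell_4}}_{\L_pL_q}.
\]

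The crucial step is then the elementary pointwise interpolation $\norm{x}_{\ell_4}^2 \leq \norm{x}_{\ell_2}\,\norm{x}_{\ell_\infty}$ which, combined with Cauchy-Schwarz in $L_q(X)$ and in $\L_p(\Omega)$ and then with Young's inequality, gives, for every $\epsilon > 0$,
\[
\E \norm[\Big]{\norm{\Delta M}_{\ell_4}}_{L_q}^p
\leq \epsilon\,\norm[\big]{[M,M]_\infty^{1/2}}_{\L_pL_q}^p
+ C_\epsilon\,\E \norm[\Big]{\norm{\Delta M}_{\ell_\infty}}_{L_q}^p.
\]
The first term on the right-hand side will be absorbed back into the left-hand side of the original estimate. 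The $\ell_\infty$-term is then controlled by the trivial pointwise bound $\norm{x}_{\ell_\infty} \leq \norm{x}_{\ell_p}$ and the H\"older-Minkowski inequality \eqref{eq:HM} (whose hypothesis $q \geq p$ is satisfied):
\[
\E \norm[\Big]{\norm{\Delta M}_{\ell_\infty}}_{L_q}^p
\leq \E \norm[\Big]{\norm{\Delta M}_{\ell_p}}_{L_q}^p
\leq \E \sum_s \norm{\Delta M_s}_{L_q}^p
= \E \int \norm{g}_{L_q}^p\,d\nu,
\]
which is the remaining term in the claim.

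The main obstacle is the absorption step, which requires a priori finiteness of $\norm[\big]{[M,M]_\infty^{1/2}}_{\L_pL_q}$; this is secured by the standard localization recipe already spelled out in the proof of Proposition \ref{prop:uno}. A minor subtlety is the limiting case $p = 2$, where $p/2 = 1$ falls outside the range of Theorem \ref{thm:iBDG}; it can be treated separately by a direct argument combining Jensen's inequality in $(\omega, x)$ with the real-valued estimate \eqref{eq:mejo} integrated over $X$.
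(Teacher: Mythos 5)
Your argument is correct in substance but follows a genuinely different route from the paper's. The paper proves this case by induction on dyadic ranges $2<p\leq 2^n$: after the same reduction to $\norm[\big]{[M,M]^{1/2}_\infty}_{\L_pL_q}$ and the same splitting $\int|g|^2\,d\mu=\int|g|^2\,d\m+\int|g|^2\,d\nu$, it estimates $\E\norm[\big]{\int|g|^2\,d\m}^{p/2}_{L_{q/2}}$ by invoking the already established range $1<p/2\leq 2$ (Propositions \ref{prop:13} and \ref{prop:due}) as the base case and the inductive hypothesis otherwise, cleaning up the resulting $L_4(\nu)$ term with Lemma \ref{lm:stimette} and \eqref{eq:HM}. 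You avoid the induction entirely. Your application of Theorem \ref{thm:iBDG} to $N=|g|^2\star\m$ at the halved exponents, turning $[N,N]^{1/2}_\infty$ into $\norm{\Delta M}^2_{\ell_4}$, is precisely the computation the paper carries out in the Lemma preceding Proposition \ref{prop:rifatta}; but where the paper there simply uses $\norm{\cdot}_{\ell_4}\leq\norm{\cdot}_{\ell_2}$, you interpolate $\norm{x}^2_{\ell_4}\leq\norm{x}_{\ell_2}\norm{x}_{\ell_\infty}$ and absorb the $\ell_2$ part back into $\norm[\big]{[M,M]^{1/2}_\infty}_{\L_pL_q}$. This yields a self-contained, induction-free proof that does not rely on the small-$p$ cases; the price is the a priori finiteness needed for the absorption.

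Two points need more care. First, the localization you invoke is not the one spelled out in Proposition \ref{prop:uno}, which serves Lenglart's domination; here you must choose stopping times making $\norm[\big]{[M,M]^{1/2}_{T_n}}_{\L_pL_q}$ finite, e.g. $T_n:=\inf\{t:\norm{[M,M]^{1/2}_t}_{L_q}\geq n\}$, observing that $\norm{[M,M]^{1/2}_{T_n}}_{L_q}\leq n+\norm{\Delta M_{T_n}}_{L_q}$ and that the overshoot has finite $p$-th moment because one may assume $\E\int\norm{g}^p_{L_q}\,d\nu<\infty$; since all constants are uniform in $n$, monotone convergence concludes. Second, your proposed treatment of the endpoint $p=2$ does not work as described: integrating \eqref{eq:mejo} over $X$ produces the term $\E\bigl(\int\norm{g}^q_{L_q}\,d\nu\bigr)^{2/q}$, which is not among the two allowed on the right-hand side. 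The endpoint is in fact immediate: for $p=2$ one has $\E\norm[\big]{[M,M]^{1/2}_\infty}^2_{L_q}=\E\norm[\big]{\int|g|^2\,d\mu}_{L_{q/2}}\leq\E\int\norm{g}^2_{L_q}\,d\mu=\E\int\norm{g}^2_{L_q}\,d\nu$ by Minkowski's integral inequality in $L_{q/2}$, which is exactly how the paper disposes of it.
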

\begin{proof}
  Recalling Theorem \ref{thm:iBDG}, one has
  \[
  \E\norm[\big]{(g \ast \m)^*_\infty}^p_{L_q} \lesssim_{p,q}
  \E\norm[\big]{[M,M]^{1/2}_\infty}^p_{L_q} 
  = \E\norm[\bigg]{\biggl( \int |g|^2\,d\mu \biggr)^{1/2}}^p_{L_q}.
  \]
  If $p=2$, it holds
  \begin{align*}
    \E\norm[\bigg]{\biggl( \int |g|^2\,d\mu \biggr)^{1/2}}^2_{L_q}
    &= \E\norm[\bigg]{\int |g|^2\,d\mu }_{L_{q/2}}\\
    &\leq \E\int \norm[\big]{|g|^2}_{L_{q/2}}\,d\mu 
    = \E\int \norm[\big]{g}^2_{L_q}\,d\mu 
    = \E\int \norm[\big]{g}^2_{L_q}\,d\nu,
  \end{align*}
  that is the claim is proved in the case $p=2$.
  Therefore we assume, for the rest of the proof, that $p>2$. One has
  \[
  \E\norm[\bigg]{\biggl( \int |g|^2\,d\mu \biggr)^{1/2}}^p_{L_q}
  \leq \E\norm[\bigg]{\bigg\lvert \int |g|^2\,d\m \bigg\rvert^{1/2}}^p_{L_q}
  + \E\norm[\bigg]{\biggl( \int |g|^2\,d\nu \biggr)^{1/2}}^p_{L_q},
  \]
  where
  \[
  \E\norm[\bigg]{\bigg\lvert \int |g|^2\,d\m \bigg\rvert^{1/2}}^p_{L_q}
  = \E\norm[\bigg]{\int |g|^2\,d\m}^{p/2}_{L_{q/2}}.
  \]
  If $2<p \leq 4$, i.e. if $1 < p/2 \leq 2$, Propositions
  \ref{prop:13} and \ref{prop:due} imply that
  \[
  \E\norm[\bigg]{\int |g|^2\,d\m}^{p/2}_{L_{q/2}}
  \lesssim_{p,q} \E \int \norm[\big]{\,|g|^2\,}^{p/2}_{L_{q/2}}\,d\nu
  = \E\int \norm[\big]{g}^p_{L_q}\,d\nu.
  \]
  We have thus proved that the claim of the Proposition is true for
  any $p$, $q$ such that $2 < p \leq 4$ and $q \geq p$. We now proceed
  by induction, i.e. we assume the claim is true for $2 < p \leq 2^n$
  (which we have just verified for $n=2$), and we show that the claim
  is true for $2 < p \leq 2^{n+1}$. In fact, we have just seen that
  \[
  \E\norm[\big]{(g \ast \m)^*_\infty}^p_{L_q} \lesssim_{p,q} 
  \E\norm[\bigg]{\int |g|^2\,d\m}^{p/2}_{L_{q/2}}
  + \E\norm[\bigg]{\biggl( \int |g|^2\,d\nu \biggr)^{1/2}}^p_{L_q},
  \]
  where, since $p/2 \leq 2^n$, the inductive assumption implies
  \begin{align*}
  \E\norm[\bigg]{\int |g|^2\,d\m}^{p/2}_{L_{q/2}} &\lesssim_{p,q}
  \E\norm[\bigg]{\biggl( \int |g|^4\,d\nu \biggr)^{1/2}}^{p/2}_{L_{q/2}}
  + \E\int \norm[\big]{|g|^2}^{p/2}_{L_{q/2}}\,d\nu\\
  &= \E\norm[\bigg]{\biggl( \int |g|^4\,d\nu \biggr)^{1/4}}^p_{L_q}
  + \E\int \norm[\big]{g}^p_{L_q}\,d\nu.
  \end{align*}
  Note that, by Lemma \ref{lm:stimette}, since $4 < p$, one has
  \[
  \biggl(\int |g|^4\,d\nu \biggr)^{1/2} = \norm[\big]{g}_{L_4(\nu)}
  \leq \norm[\big]{g}_{L_2(\nu)} + \norm[\big]{g}_{L_p(\nu)},
  \]
  hence
  \[
  \E\norm[\bigg]{\biggl( \int |g|^4\,d\nu \biggr)^{1/4}}^p_{L_q}
  \lesssim
  \E\norm[\bigg]{\biggl( \int |g|^2\,d\nu \biggr)^{1/2}}^p_{L_q}
  + \E \norm[\Big]{\norm[\big]{g}_{L_p(\nu)}}^p_{L_q}.
  \]
  The proof is finished by observing that, since $q \geq p$,
  H\"older-Minkowski's inequality \eqref{eq:HM} yields
  \[
  \E \norm[\Big]{\norm[\big]{g}_{L_p(\nu)}}^p_{L_q} \leq 
  \E \norm[\Big]{ \norm[\big]{g}_{L_q} }^p_{L_p(\nu)}
  = \E \int \norm[\big]{g}^p_{L_q}\,d\nu.
  \qedhere
  \]
\end{proof}


\subsection{Case $2 \leq q \leq p$}
The upper bound in Theorem \ref{thm:m} with parameters $p$ and $q$
such that $2 \leq p \leq q$ is proved in the following
Proposition.
\begin{prop}
  Let $2 \leq q \leq p$. Then one has
  \[
  \E\sup_{t \geq 0} \norm[\big]{(g \star \m)_t}_{L_q} \lesssim_{p,q}
  \E\int \norm[\big]{g}^p_{L_q}\,d\nu
  + \E\biggl( \int \norm[\big]{g}^q_{L_q}\,d\nu \biggr)^{p/q}
  + \E\norm[\bigg]{\biggl( \int |g|^2\,d\nu \biggr)^{1/2}}^p_{L_q}
\]
\end{prop}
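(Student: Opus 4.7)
The plan is to mirror the strategy of the preceding subsection (Case $2\leq p\leq q$), combining Theorem \ref{thm:iBDG} with the decomposition $\mu=\m+\nu$ and then applying the already-proven case-specific bounds to the auxiliary $L_{q/2}$-valued martingale $N:=|g|^2\star\m$ with exponents $(p/2,q/2)$. The technical twist compared to Case $2\leq p\leq q$ is that H\"older-Minkowski's inequality \eqref{eq:HM} now runs in the \emph{wrong} direction when $q\leq p$; in the only recursive subcase this forces us to invoke Lemma \ref{lm:stimette} with upper exponent $q$ rather than $p$, so that the surviving mixed norm can be collapsed by Fubini.

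By Theorem \ref{thm:iBDG}, subadditivity of the square root, and $(a+b)^{p/2}\lesssim_p a^{p/2}+b^{p/2}$, one obtains
\[
\E\norm[\big]{M^*_\infty}^p_{L_q}
\lesssim_{p,q}
\E\norm[\bigg]{\biggl(\int|g|^2\,d\mu\biggr)^{\!1/2}}^{p}_{L_q}
\lesssim_p
\E\norm[\bigg]{\biggl(\int|g|^2\,d\nu\biggr)^{\!1/2}}^{p}_{L_q}
+\E\norm[\bigg]{\int|g|^2\,d\m}^{p/2}_{L_{q/2}}.
\]
The first summand on the right is precisely $\norm{g}_{\tilde{L}_{p,q}}^p$, one of the three target terms; the second is a moment of $N_\infty$ in $L_{q/2}$, to be handled case by case.

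If $q=2$, then $L_q$ is Hilbertian and integrating \eqref{eq:mejo} (with $H=\erre$) pointwise in $x$ over $X$ yields the whole claim at once, using that $\tilde{L}_{p,2}=L_{p,2,2}$ by Fubini. If $2<q\leq p\leq 4$, the pair $(p/2,q/2)$ lies in the case $1<q'\leq p'\leq 2$ and Proposition \ref{lm:34} applied to $N$ produces exactly $\E\int\norm{g}_{L_q}^p\,d\nu$ and $\E(\int\norm{g}_{L_q}^q\,d\nu)^{p/q}$. If $2<q\leq 4\leq p$, then $(p/2,q/2)$ lies in the case $1<q'\leq 2\leq p'$ and Proposition \ref{lm:38} yields the same two terms. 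Finally, if $4\leq q\leq p$ we proceed by induction on $p$, with base case $p=q=2$ (immediate from \eqref{eq:mejo}); the inductive hypothesis applied to $N$ produces the two target terms together with a residual
\[
\E\norm[\Big]{\norm{g}_{L_4(\nu)}}^{p}_{L_q}
=\E\norm[\bigg]{\biggl(\int|g|^4\,d\nu\biggr)^{\!1/2}}^{p/2}_{L_{q/2}}.
\]

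For this residual, Lemma \ref{lm:stimette} with $r=2$, exponent $4$, and $s=q$ (admissible when $q>4$) gives the pointwise-in-$x$ bound $\norm{g}_{L_4(\nu)}^q\leq\norm{g}_{L_2(\nu)}^q+\norm{g}_{L_q(\nu)}^q$; integrating over $X$, using $(a+b)^{p/q}\lesssim a^{p/q}+b^{p/q}$ (since $p/q\geq 1$), and applying Fubini identifies the two resulting contributions with $\norm{g}_{\tilde{L}_{p,q}}^p$ and $\norm{g}_{L_{p,q,q}}^p$, respectively. The boundary $q=4$ is even simpler: $\norm{g}_{L_4(\nu)}^4=\int|g|^4\,d\nu$, so Fubini gives $\norm[\big]{\norm{g}_{L_4(\nu)}}^p_{L_q}=(\int\norm{g}_{L_q}^q\,d\nu)^{p/q}=\norm{g}_{L_{p,q,q}}^p$ directly. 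The main obstacle is precisely this recursive subcase: one cannot mimic the previous Proposition by choosing $s=p$ in Lemma \ref{lm:stimette}, because the resulting $\E\norm[\big]{\norm{g}_{L_p(\nu)}}^p_{L_q}$ cannot be swapped into $\E\int\norm{g}_{L_q}^p\,d\nu$ via \eqref{eq:HM} when $q\leq p$; choosing $s=q$ instead collapses the mixed norm to a pure one through Fubini, at the cost of requiring $q>4$ and hence a separate treatment of the boundary $q=4$.
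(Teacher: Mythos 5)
Your proof is correct and follows essentially the same route as the paper: Theorem \ref{thm:iBDG} plus the decomposition $\mu=\m+\nu$, reduction to the $L_{q/2}$-valued martingale $|g|^2\star\m$ with exponents $(p/2,q/2)$ handled by Propositions \ref{lm:34} and \ref{lm:38} when $2<q\le 4$, and a doubling recursion in which Lemma \ref{lm:stimette} is applied with upper exponent $q$ (rather than $p$, precisely because \eqref{eq:HM} runs the wrong way for $q\le p$) so that the residual term collapses by Fubini. The only quibble is that the recursion is more naturally indexed by the number of halvings needed to bring $q$ into $[2,4]$ (the paper inducts on $n$ with $q\le 2^n$) rather than ``on $p$ with base case $p=q=2$''; but since the cases you establish before the induction cover every terminal instance of the recursion, the argument is complete.
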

\begin{proof}
  The claim is certainly true if $q=2$, by Theorem \ref{thm:H}. We
  shall therefore assume $q>2$ from now on. In view of Theorem
  \ref{thm:iBDG}, it is enough to estimate the $\L_pL_q$ norm of
  $[M,M]_\infty^{1/2}$. We use once again the decomposition
  \[
  [M,M]_\infty^{1/2} = \biggl( \int |g|^2\,d\mu \biggr)^{1/2}
  \leq \biggl| \int |g|^2\,d\m \biggr|^{1/2}
  + \biggl( \int |g|^2\,d\nu \biggr)^{1/2},
  \]
  and its immediate consequence
  \[
  \norm[\big]{[M,M]_\infty^{1/2}}_{\L_pL_q} \leq
  \norm[\bigg]{\biggl| \int |g|^2\,d\m \biggr|^{1/2}}_{\L_pL_q} 
  + \norm[\bigg]{\biggl( \int |g|^2\,d\nu \biggr)^{1/2}}_{\L_pL_q}.
  \]
  We are thus left with the task of estimating the first term on the
  right-hand side. In particular, writing
  \begin{equation}     \label{eq:tardi}
  \E\norm[\bigg]{\biggl| \int |g|^2\,d\m \biggr|^{1/2}}^p_{L_q}
  = \E\norm[\bigg]{\int |g|^2\,d\m}^{p/2}_{L_{q/2}},
  \end{equation}
  we observe that, assuming $2 < q \leq 4$, i.e. $1<q/2 \leq 2$,
  Propositions \ref{lm:34} and \ref{lm:38} imply
  \begin{align*}
  \E\norm[\bigg]{\biggl| \int |g|^2\,d\m \biggr|^{1/2}}^p_{L_q}
  &\lesssim_{p,q} 
   \E\biggl( \int \norm[\big]{|g|^2}^{q/2}_{L_{q/2}}\,d\nu \biggr)^{p/q}
   + \E\int \norm[\big]{|g|^2}^{p/2}_{L_{q/2}}\,d\nu\\
  &= \E\biggl( \int \norm[\big]{g}^q_{L_q}\,d\nu \biggr)^{p/q}
   + \E\int \norm[\big]{g}^p_{L_q}\,d\nu.
  \end{align*}
  We have thus proved that the claim is claim is true for any $q \in
  [2,4]$. We proceed by induction, showing that if the claim is true
  for $2 \leq q \leq 2^n$ (which is indeed the case with $n=2$), then
  the claim remains true for $2 \leq q \leq 2^{n+1}$. In view of the
  reasoning at the beginning of the proof, it suffices to estimate the
  term on the right-hand side of \eqref{eq:tardi}: one has
  \begin{align*}
   \E\norm[\bigg]{\int |g|^2\,d\m}^{p/2}_{L_{q/2}} &\lesssim_{p,q} 
   \E\biggl( \int \norm[\big]{|g|^2}^{q/2}_{L_{q/2}}\,d\nu \biggr)^{p/q}
   + \E\int \norm[\big]{|g|^2}^{p/2}_{L_{q/2}}\,d\nu
   + \E\norm[\bigg]{\biggl( \int |g|^4\,d\nu \biggr)^{1/2}}^{p/2}_{L_{q/2}}\\
  &= \E\biggl( \int \norm[\big]{g}^q_{L_q}\,d\nu \biggr)^{p/q}
   + \E\int \norm[\big]{g}^p_{L_q}\,d\nu
   + \E\norm[\bigg]{\biggl( \int |g|^4\,d\nu \biggr)^{1/4}}^p_{L_q}
  \end{align*}
  Since $4 < q$, Lemma \ref{lm:stimette} yields 
  \[
  \biggl( \int |g|^4\,d\nu \biggr)^{1/4} = \norm{g}_{L_4(\nu)}
  \leq \norm{g}_{L_2(\nu)} + \norm{g}_{L_q(\nu)},
  \]
  hence also
  \begin{align*}
    \E\norm[\bigg]{\biggl( \int |g|^4\,d\nu \biggr)^{1/4}}^p_{L_q}
    &\leq \E\norm*{\norm[\big]{g}_{L_2(\nu)}}^p_{L_q} 
    + \E\norm*{\norm[\big]{g}_{L_q(\nu)}}^p_{L_q}\\
    &= \E\norm[\bigg]{\biggl( \int |g|^2\,d\nu \biggr)^{1/2}}^p_{L_q}
    + \E\norm*{\norm[\big]{g}_{L_q}}^p_{L_q(\nu)}\\
    &= \E\norm[\bigg]{\biggl( \int |g|^2\,d\nu \biggr)^{1/2}}^p_{L_q}
    + \E\biggl( \int \norm[\big]{g}^q_{L_q}\,d\nu \biggr)^{p/q},
  \end{align*}
  thus concluding the proof.
\end{proof}

\let\oldbibliography\thebibliography
\renewcommand{\thebibliography}[1]{%
  \oldbibliography{#1}%
  \setlength{\itemsep}{-1pt}%
}

\bibliographystyle{amsplain}
\bibliography{ref}

\end{document}